\newcommand{\s}[1]{{\mathcal #1}}
\newcommand{\sr}[1]{{\mathscr #1}}
\newcommand{\bb}[1]{{\mathbb #1}}
\newtheorem{theorem}{Theorem} 
\newtheorem{corollary}[theorem]{Corollary}
\newtheorem{lemma}[theorem]{Lemma}
\newtheorem{proposition}[theorem]{Proposition}
\newtheorem{definition}[theorem]{Definition}
\newtheorem{assumption}[theorem]{Assumption}
\numberwithin{equation}{section}
\numberwithin{theorem}{section}
\begin{document}

\title{Linear Quadratic Mean Field Type Control and Mean Field Games with Common Noise, with Application to Production of an Exhaustible Resource}
\author{P. Jameson Graber}
\thanks{The author is grateful to be supported in this work by the National Science Foundation under NSF Grant DMS-1303775.}
\address{International Center for Decision and Risk Analysis\\
Naveen Jindal School of Management\\
The University of Texas at Dallas\\
800 West Campbell Rd, SM30\\
Richardson, TX 75080-3021\\
Phone: (972) 883-6249}
\email{pjg140130@utdallas.edu}


\dedicatory{Version: \today}

\maketitle

\begin{abstract}
We study a general linear quadratic mean field type control problem and connect it to mean field games of a similar type.
The solution is given both in terms of a forward/backward system of stochastic differential equations and by a pair of Riccati equations.
In certain cases, the solution to the mean field type control is also the equilibrium strategy for a class of mean field games.
We use this fact to study an economic model of production of exhaustible resources.

\emph{Keywords: mean field type control, mean field games, linear-quadratic, optimal control, riccati equations, exhaustible resource production}

MSC: 93E20
\end{abstract}

\section{Introduction} \label{sec:intro}

The purpose of this work is to study mean field games and mean field type control problems of linear quadratic type, primarily those motivated by a certain kind of application to economics, the quintessential example being the production of an exhaustible resource.
Let us recall that mean field game theory was introduced by the parallel works of Caines, Huang and Malham\'e \cite{huang2006large} and of Lasry and Lions \cite{lasry06,lasry06a,lasry07}, with a general aim to study the interactions of large populations of rational actors.
A \textit{mean field game} refers essentially to an \textit{equilibrium} which occurs when the strategy employed by a representative agent of a given crowd is optimal given the costs imposed by that crowd.
A useful overview of the topic can be found in the notes of Cardaliaguet \cite{cardaliaguet2010notes} based on the lectures of Lions at the Coll\`ege de France \cite{lions07}.
For an introduction to both theory and applications of mean field games, see especially the Paris-Princeton Lectures of Gu\'eant, Lasry and Lions \cite{gueant2011mean}.
See also the survey by Gomes \cite{gomesmean}.
For a probabilistic analysis of mean field games, see Carmona and Delarue \cite{carmona2013probabilistic}.
We also mention that numerical methods have been important in the development of mean field game theory; see especially Achdou and Cappuzzo-Dolcetta \cite{achdou2010mean} and Achdou, et al.~\cite{achdou2012mean}.

A related but distinct concept is that of \textit{mean field type control}.
In this case, the goal is to assign a strategy to all agents at once, such that the resulting crowd behavior is optimal with respect to costs imposed on a \textit{central planner}.
For a comparison of mean field games and mean field type control, see the book of Bensoussan, Frehse, and Yam \cite{bensoussan2013mean} as well as the article by Carmona, Delarue, and Lachapelle \cite{carmona2013control}.
A key reference is the work of Carmona and Delarue \cite{carmona2015forward}, which characterizes solutions to the mean field type control problem in terms of a stochastic maximum principle for McKean-Vlasov type dynamics.

While mean field type control is conceptually distinct from mean field games, and although in general an optimal control on the one hand is not an equilibrium strategy on the other, nevertheless in many cases a mean field Nash equilibrium is also the solution to an optimal control problem, as pointed out at least as early as \cite{lasry07}.
Many researchers have used this insight to generate results concerning existence and uniqueness \cite{cardaliaguet2013weak,cardaliaguet2014mean,cardaliaguet2015second,meszaros2015variational,cardaliaguet2015first} as well as computation of mean field Nash equilibria \cite{benamou2016variational}.
It must be understood that the overall \textit{minimized} cost is smaller than the total cost to all individual players; the difference between the two is called the \textit{price of anarchy} (see the discussion in \cite{benamou2016variational}).
The present work also highlights this point of view.
Our motivating example from economics, while conceptually construed as a Nash equilibrium, can be solved via a reformulation as an optimal control problem.
For this reason our results are more heavily inclined toward the study of mean field type control, even though, \textit{a priori,} we are interested in mean field games.

One of the most natural ways to apply mean field game theory is to such fields as economics and systemic risk, since here the critical questions concern the behavior of large numbers of individuals motivated by similar incentives.
See, for instance, the thesis of Gu\'eant \cite{gueant2009mean} on mean field games and economics, as well as related work by Gu\'eant et al.~\cite{gueant2008application} and by Lachapelle, Salomon, and Turinici \cite{lachapelle2010computation}; the influential paper of Lucas and Moll \cite{lucas11}; the work of Carmona, Fouque and Sun on systemic risk \cite{carmona13}; and many other references, many of which can be found in the survey articles \cite{achdou2014partial,burger2014partial,gomes2014socio}.

In this paper we are particularly motivated by a model of the production of an exhaustible resource, such as oil.
We draw our inspiration from a model found in \cite{gueant2011mean} and later adapted by Chan and Sircar in \cite{chan2014bertrand,chan2015fracking}.
(See also the work of Bauso, Tembine, and Basar \cite{bauso2012robust}.)
A well-posedness result for a related system of partial differential equations appears in a paper by Bensoussan and the present author \cite{graber2015existence}.
The basic structure of the model is as follows.
Let $X$ represent the amount of remaining reserves held by a firm, $v$ the level of production, and the dynamics governed by a linear stochastic differential equation:
$$
dX(s) = -v(s)ds + \nu X(s) dW(s) + \nu_0X(s) dW_0(s), \ X(t) = x.
$$
A (mean field) Nash equilibrium is obtained whenever each firm has solved the profit maximization problem
$$
\sup_{v} \bb{E}\left[\int_t^T e^{-\mu(s-t)}v(s)(2\alpha - 2\beta \bar \psi(s) - v(s))ds - e^{-\mu(T-t)}|X(T)|^2 \right]
$$
where $\mu$ is the discount rate and $\psi(s) = \bb{E}[\hat v(s)|\sr{F}_s^0]$ is the conditional expectation (given the common noise) of the equilibrium strategy $\hat v(s)$.
The reason for the appearance of the conditional expectation of the equilibrium strategy in the objective functional is that the market price is determined by taking an average.
In this model, we have simplified the calculation of the price considerably by taking a \textit{linear} demand schedule (cf.~Chan and Sircar \cite{chan2014bertrand}).
Hence the mean field game is of linear-quadratic form.

Linear quadratic models were among the first to receive full mathematical treatment by researchers studying mean field games.
For the infinite time horizon case, we mention the work of Gu\'eant \cite{gueant2009reference}, of Caines, Huang, and Malham\'e  \cite{huang2003individual,huang2007large,huang2010social,huang2012social}, and of Li and Zhang \cite{li2008asymptotically}.
For the finite time horizon case, we mention in particular the work of Bensoussan et al.~\cite{bensoussan2011linear}, which deals both with mean field games and mean field type control problems.
See also \cite{bardi2012explicit,gueant2012mean,lachapelle2010human,lachapelle2010computation,lachapelle2011on,tembine2011risk,yang2011mean}.
For the discrete time case, see the recent work of Ni, Zhang, and Li \cite{ni2015indefinite}.

On the mean field type control side, the general linear quadratic case without common noise has been dealt with in the work of Yong \cite{yong2013linear}.
Unlike most other references, his result allows the cost functional to depend on the \textit{expected value of the control} as well as of the state variable.
In that work the motivation for this dependence comes from problems involving the minimization of the variance, both of the control and state variables.
In this paper, by contrast, we are interested in economic applications, and in fact we will see that the mean field \textit{game} described above can be written as a mean field type \textit{control} problem, in that the equilibrium strategy is the optimal decision for a central planner trying to minimize the following objective functional:
$$
\bb{E}\left[\int_t^T e^{-\mu(s-t)}v(s)(v(s) + \beta \bb{E}[v(s)|\sr{F}_s^0] - 2\alpha)ds + e^{-\mu(T-t)}|X(T)|^2 \right].
$$

Three other recent works of particular importance in the context of mean field type control have recently been published by Pham and Wei \cite{pham2015bellman,pham2016dynamic} and by Pham \cite{pham2016linear}.
Pham and Wei develop a dynamic programming technique (see also \cite{lauriere2014dynamic}), with corresponding Hamilton-Jacobi equations on an infinite dimensional space of probability measures, for solving mean field type optimal control problems, first without \cite{pham2015bellman} and then with common noise \cite{pham2016dynamic}.
Both of these citations include brief applications to linear-quadratic problems.
We note that \cite{pham2015bellman} reproduces and slightly extends the results of \cite{yong2013linear} using dynamic programming.
Meanwhile \cite{pham2016dynamic} treats the case of a common noise; however, in that case the control is already adapted to the common noise, which allows the authors to prove that the distribution of the state variable is Markovian (a crucial step in proving the dynamic programming principle).
In \cite{pham2016linear} Pham further develops the theory of linear quadratic problems with common noise by considering \textit{random coefficients}; again in this reference the control is adapted to the common noise.
The present work is complementary to  \cite{pham2015bellman,pham2016dynamic,pham2016linear} in that (a) we investigate the case where the control is not necessarily adapted to the common noise, and both the control and its conditional expectation (given the common noise) are variables in the dynamics and the cost; (b) we explicitly consider the connection with mean field games; and (c) we are inspired by a particular application to economics, in contrast to the financial applications of these other works.
However, we only consider deterministic coefficients.

The distinguishing features of the present work are as follows.
First, we will deal with the case of a \textit{common noise,} which is taken to represent an inherent uncertainty in nature affecting simultaneously all the agents participating in the game (or being controlled by a central planner).
Second, as in \cite{yong2013linear,pham2015bellman} but unlike most of the previous references, we will consider the \textit{conditional expectation of the control variable} (or the equilibrium strategy in the case of mean field games) as a factor in the dynamics and quadratic cost.
Finally, we use an economic model from recent literature to illustrate the applicability of our general framework.

Mean field games with common noise have been analyzed recently using several different approaches.
First we mention the master equation, first introduced by Lions in his lectures at the \textit{Coll\`ege de France} \cite{lions07} (see also \cite{cardaliaguet2010notes,carmona2014master,bensoussan2015interpretation}).
This is a partial differential equation on the Wasserstein space of probability measures, where the common noise is encoded in a second-order derivative with respect to the measure variable.
A well-posedness result for the master equation can be found in \cite{cardaliaguet2015master}, including in the case of a common noise.
A different approach, from a probabilistic point of view,
can be found in the work of Ahuja \cite{ahuja2016wellposedness} and Carmona, Delarue, and Lacker \cite{carmona2014mean}.
The former starts with the stochastic maximum principle for a representative player and uses a fixed point argument to find a mean field Nash equilibrium.
The latter also develops a theory of weak solutions for which there is a quite general existence result.

A question of particular interest in the case of common noise is whether the mean field equilibrium can act as an approximate solution of an $N$-player game for large $N$.
Such an approximation is known as ``$\epsilon$-Nash equilibrium" \cite{cardaliaguet2010notes,huang2003individual,huang2007large,huang2006large}.
The first largely comprehensive account of this problem for mean field games with common noise was given by Lacker in \cite{lacker2014general}.
Note, however, that this general result does not cover all linear quadratic models.
In the present work we will prove that mean field game solutions serve as $\epsilon$-Nash equilibria for large $N$-player games, but only in a special case which most directly motivates our results, namely when the \textit{game} is in fact solved by an \textit{optimal control problem.}


To complete this introduction, we give an outline of the rest of the paper.
In Section \ref{sec:mftc} we completely solve the linear-quadratic mean field type control problem.
We characterize the solution both in terms of a stochastic maximum principle (forward-backward system of stochastic differential equations) and Riccati equations.
In Section \ref{sec:mfg} we discuss the question of Nash equilibrium.
Rather than seek the most general possible solution, our main goal will be to provide criteria that allow an equilibrium to be interpreted as a global optimal solution for a mean field control problem, in which case the results of Section \ref{sec:mftc} can be applied to show that there is a unique solution to the mean field game.
Finally, in Section \ref{sec:econ}, we describe and solve a linear-quadratic version of an economic production model.

\section{The Mean Field Type Control Problem}
\label{sec:mftc}
Fix an initial time $t$ and a final time $T$.
Let $(\Omega,(\sr{F}_s)_{s \in [t,T]},\bb{P})$ be a complete filtered probability space.
We suppose that $W(s),W_0(s)$ are independent $(\sr{F}_s)_{s \in [t,T]}$-Wiener processes, and that $x$ is a random variable independent of $W(s),W_0(s)$.
(Here $W_0(s)$ is considered as the \textit{common noise.})
Throughout we will denote by $(\sr{F}_s^0)_{s \in [t,T]}$ the filtration generated by $W_0(s), s \in [t,T]$.
If $X(s)$ is any stochastic process adapted to $(\sr{F}_s)_{s \in [t,T]}$, we denote $\bar X(s) = \bb{E}[X(s)|\sr{F}_s^0]$, the conditional expectation of $X(s)$ given $W_0(s)$.

The linear-quadratic mean field type control problem is formulated as follows.
An \textit{admissible control} is defined to be a square integrable $(\sr{F}_s)_{s \in [t,T]}$-adapted process with values in $\bb{R}^m$.
The corresponding state variable $X(s)$ is an $\bb{R}^d$-valued adapted process satisfying the dynamics
\begin{multline} \label{lq dynamics}
dX(s) = \left\{A(s)X(s) + \bar{A}(s)\bar X(s) + B(s)v(s) + \bar{B}(s)\bar v(s) \right\}ds
\\
+ \left\{C(s)X(s) + \bar{C}(s)\bar X(s) + D(s)v(s) + \bar{D}(s)\bar v(s)\right\}dW(s)
\\
+ \left\{F(s)X(s) + \bar{F}(s)\bar X(s) + G(s)v(s) + \bar{G}(s)\bar v(s)\right\}dW_0(s), \ \ X(t) = x.
\end{multline}
Let $\langle .,. \rangle$ be the inner product on Euclidean space.
The objective functional is given by
\begin{multline} \label{mftc objective functional}
J^{LQ}_{x,t}(v) = \bb{E}\biggl\{\int_t^T \bigl[\langle Q(s)X(s),X(s) \rangle  + \langle \bar Q(s)\bar X(s),\bar X(s) \rangle  
+ \langle R(s)v(s),v(s) \rangle + \langle \bar R(s) \bar v(s),\bar v(s) \rangle
\\
+ 2\langle S(s)X(s),v(s) \rangle + 2\langle \bar S(s) \bar X(s),\bar v(s) \rangle
+ 2\langle q(s),X(s)\rangle + 2\langle \bar q(s),\bar X(s) \rangle 
\\
+ 2\langle r(s),v(s) \rangle + 2\langle \bar r(s), \bar v(s) \rangle \bigr]ds
+ \langle HX(T),X(T) \rangle + \langle \bar H \bar X(T),\bar X(T) \rangle \biggr\}.
\end{multline}
We seek an optimal control $\hat v$ such that
\begin{equation}
\label{LQ optimal control problem}
J^{LQ}_{x,t}(\hat v) = \inf_{v} J^{LQ}_{x,t}(v).
\end{equation}
Let us now give some standing assumptions on the coefficients.
First, we define $\s{S}^n$ to be the set of all $n \times n$ symmetric matrices with real entries.
Now we state the following:
\begin{assumption} \label{coeffs}
The coefficient matrices satisfy
\begin{enumerate}
\item $A,\bar A,C, \bar C, F, \bar F \in L^\infty(0,T;\bb{R}^{d \times d})$
\item $B, \bar B,D, \bar D, G, \bar G \in L^\infty(0,T;\bb{R}^{d \times m})$
\item $Q, \bar Q \in L^\infty(0,T;\s{S}^d), \ R,\bar R \in L^\infty(0,T;\s{S}^m), \ H,\bar H \in \s{S}^d$
\item $H, H + \bar H \geq 0$, and for some $\delta_1 \geq 0, \delta_2 > 0$, $Q, Q + \bar{Q} \geq \delta_1 I$ and $R, R + \bar R \geq \delta_2 I$
\item $S,\bar S \in L^\infty(0,T;\bb{R}^{m \times d}); q, \bar q \in L^\infty(0,T;\bb{R}^d); r, \bar r \in L^\infty(0,T;\bb{R}^m)$
\item $\|S\|_\infty^2, \|S+\bar S\|_\infty^2 < \delta_1 \delta_2$ if $\delta_1 > 0$, $S = \bar S = 0$ otherwise.
\end{enumerate}
\end{assumption}
Under these assumptions, the dynamics \eqref{lq dynamics} are well-posed in the following sense:
\begin{lemma}
Let $v$ be an admissible control process and $x$ an $L^2(\Omega,\sr{F}_t,\bb{P})$ random variable. Then there exists a unique $(\sr{F}_s)_{t \leq s \leq T}$-adapted state process $X(s)$ satisfying \eqref{lq dynamics} with a continuous version such that
$$
\bb{E} \int_t^T |X(s)|^2 ds < \infty.
$$
\end{lemma}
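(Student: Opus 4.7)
The plan is to decouple the McKean--Vlasov aspect of \eqref{lq dynamics} by first solving a closed equation for the conditional mean $\bar X(s)$ alone, and then treating the full equation for $X(s)$ as a standard linear SDE with a known forcing term. The only nonstandard feature of \eqref{lq dynamics} is the appearance of $\bar X$ in the drift and diffusion coefficients; once $\bar X$ is known as an $(\mathcal{F}_s^0)$-adapted process, the equation is linear in $X$ with bounded (in fact, deterministic) coefficients and belongs to the classical Itô theory.

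First, I would take conditional expectation with respect to $\mathcal{F}_s^0$ on both sides of \eqref{lq dynamics}. Using that $W$ is independent of $W_0$ (so the stochastic integral against $dW$ has zero conditional mean given $\mathcal{F}^0$) and that the conditional expectation commutes with the $dW_0$-integral in the sense that
\[
\bb{E}\!\left[\int_t^s Y(u)\,dW_0(u) \,\Big|\, \sr{F}_s^0\right] = \int_t^s \bb{E}[Y(u)\,|\, \sr{F}_u^0]\,dW_0(u)
\]
for any $(\mathcal{F}_u)$-adapted, square-integrable $Y$, together with Fubini for the $ds$-integral, I obtain a closed linear SDE for $\bar X$ of the form
\begin{multline*}
d\bar X(s) = \bigl\{(A(s)+\bar A(s))\bar X(s) + (B(s)+\bar B(s))\bar v(s)\bigr\}\,ds \\
+ \bigl\{(F(s)+\bar F(s))\bar X(s) + (G(s)+\bar G(s))\bar v(s)\bigr\}\,dW_0(s),
\qquad \bar X(t) = \bb{E}[x\,|\,\sr{F}_t^0].
\end{multline*}
This is a standard linear SDE on $(\Omega, (\mathcal{F}_s^0), \bb{P})$ with $L^\infty$ coefficients and forcing term $\bar v \in L^2_{\mathcal{F}^0}([t,T]\times \Omega)$ (by conditional Jensen applied to the admissible control $v$), and $L^2$ initial data. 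Classical theory (e.g. Picard iteration, or direct use of the variation-of-constants formula) provides a unique continuous $(\mathcal{F}_s^0)$-adapted solution $\bar X(s)$ with $\bb{E}\sup_{s \in [t,T]}|\bar X(s)|^2 < \infty$.

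With $\bar X$ in hand as a known process, I would next view \eqref{lq dynamics} as a linear SDE for $X$ alone: the drift is $A(s)X(s) + g_1(s)$, the $dW$-diffusion is $C(s)X(s)+g_2(s)$, and the $dW_0$-diffusion is $F(s)X(s)+g_3(s)$, where $g_1,g_2,g_3$ are $(\mathcal{F}_s)$-adapted and square-integrable (because each is a bounded linear combination of $\bar X, v, \bar v$). This is the classical linear SDE setting and existence, uniqueness, continuity of paths and the bound $\bb{E}\int_t^T|X(s)|^2 ds < \infty$ follow from standard estimates (Gronwall applied after a Burkholder--Davis--Gundy bound). For uniqueness of the coupled problem, if $X_1, X_2$ are two solutions of \eqref{lq dynamics}, then by the conditional-expectation computation above both $\bar X_1$ and $\bar X_2$ solve the linear SDE for $\bar X$, hence coincide by its uniqueness; the decoupled linear SDE for $X$ then forces $X_1 = X_2$.

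The only step that requires a little care rather than being routine is the commutation of conditional expectation with the $W_0$-stochastic integral; I would justify it either by approximating the integrand by simple processes (for which the identity is immediate by the tower property and the independence of $W$ and $W_0$) and passing to the limit in $L^2$ using the Itô isometry, or by invoking it as a known property of stochastic integrals against a sub-filtration's driving Brownian motion. All other steps are standard linear SDE manipulations.
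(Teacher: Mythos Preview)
Your decoupling approach is correct and is a genuinely different route from the paper's. The paper simply invokes a contraction (fixed-point) argument on the full McKean--Vlasov SDE, citing Yong's work for details. Your argument instead reduces the problem to two successive \emph{standard} linear SDEs: one for the conditional mean driven only by $W_0$, and then one for $X$ with the conditional mean treated as a known forcing term. This is more explicit and more elementary in this linear setting; the paper's fixed-point route is more routine to state and generalizes to nonlinear McKean--Vlasov dynamics, but gives less structural information.

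One small gap in your existence argument: after you solve the reduced SDE and obtain an $(\sr{F}_s^0)$-adapted process, call it $\xi$, and then solve \eqref{lq dynamics} for $X$ with $\xi$ substituted for $\bar X$, you still need to check that in fact $\bb{E}[X(s)\mid \sr{F}_s^0]=\xi(s)$; otherwise $X$ does not actually solve \eqref{lq dynamics}. This is easy: take conditional expectation of the equation for $X$ (using exactly the commutation identities you already isolated), observe that $\bb{E}[X\mid\sr{F}^0]$ solves the same linear SDE as $\xi$ with the same initial datum $\bb{E}[x\mid\sr{F}_t^0]$, and invoke uniqueness for that SDE. You used precisely this argument for uniqueness of the coupled problem, so it is just a matter of stating it also on the existence side.
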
 

\begin{proof}
This is proved in a straightforward manner using a fixed point argument, following standard theory for McKean-Vlasov dynamics.
(See \cite{yong2013linear} for details.)
\end{proof}

\begin{lemma}
The functional $J_{x,t}^{LQ}$ is uniformly convex and has a unique minimizer.
\end{lemma}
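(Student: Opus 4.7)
The plan is to express $J^{LQ}_{x,t}(v) = J_2(v) + L(v) + C$, where $J_2$ is the pure quadratic part (obtained by setting $x = 0$ and dropping all affine-in-$(X,v)$ terms), $L$ is linear in $v$, and $C$ depends only on $x$. Since the dynamics \eqref{lq dynamics} are linear in $(x,v)$, the map $v \mapsto X^v$ is affine and continuous from $L^2_{\sr F}(t,T;\bb{R}^m)$ into $L^2_{\sr F}(t,T;\bb{R}^d)$ by standard SDE estimates, so $J^{LQ}_{x,t}$ is a continuous quadratic-affine functional on a Hilbert space. It therefore suffices to prove a coercivity estimate $J_2(v) \geq c\,\bb{E}\int_t^T |v(s)|^2\,ds$ for some $c > 0$; uniform convexity and existence of a unique minimizer will then follow from the direct method in the calculus of variations.

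To establish coercivity, I would split every process into its conditional mean and its centered part: $X = \tilde X + \bar X$ and $v = \tilde v + \bar v$ with $\tilde X = X - \bar X$, $\tilde v = v - \bar v$. Conditioning on $\sr{F}^0_s$ kills all cross terms of the form $\bb{E}[\langle M\tilde Y, \bar Z\rangle]$, so $J_2$ decomposes as the sum of a quadratic form in $(\tilde X, \tilde v)$ with coefficient matrix $\begin{pmatrix} Q & S^T \\ S & R\end{pmatrix}$, a quadratic form in $(\bar X, \bar v)$ with coefficient matrix $\begin{pmatrix} Q+\bar Q & (S+\bar S)^T \\ S+\bar S & R+\bar R\end{pmatrix}$, and terminal terms $\bb{E}[\langle H\tilde X(T), \tilde X(T)\rangle + \langle (H+\bar H)\bar X(T), \bar X(T)\rangle]$. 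Assumption~(4) together with the threshold $\|S\|_\infty^2, \|S+\bar S\|_\infty^2 < \delta_1\delta_2$ in assumption~(6) (or, in the degenerate case $\delta_1 = 0$, the forced vanishing $S = \bar S = 0$) makes each block matrix uniformly positive definite via a standard Schur-complement/Young-type estimate, so each block contributes at least $c\,|\tilde v|^2$ respectively $c\,|\bar v|^2$. The terminal contributions are nonnegative because $H, H+\bar H \geq 0$. Summing and invoking the orthogonal decomposition $\bb{E}|v|^2 = \bb{E}|\tilde v|^2 + \bb{E}|\bar v|^2$ yields the coercivity bound.

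With coercivity of $J_2$ in hand, $J^{LQ}_{x,t}$ is strongly convex and continuous on the Hilbert space of admissible controls. Any minimizing sequence is therefore bounded in $L^2_{\sr F}$, admits a weakly convergent subsequence, and weak lower semicontinuity (a consequence of convexity plus continuity) shows the weak limit is a minimizer; uniqueness follows from strict convexity. The main delicate step in this plan is verifying that the two $2\times 2$ block coefficient matrices are uniformly positive definite under the precise thresholds on $\|S\|_\infty$ and $\|S+\bar S\|_\infty$ in assumption~(6); every remaining piece reduces to standard Hilbert-space arguments or routine SDE estimates.
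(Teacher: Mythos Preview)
Your proposal is correct and follows essentially the same approach as the paper: both proofs hinge on the orthogonal decomposition $X = (X-\bar X) + \bar X$, $v = (v-\bar v) + \bar v$, which rewrites the running cost as a quadratic form in the centered variables with coefficients $(Q,R,S)$ plus a quadratic form in the conditional means with coefficients $(Q+\bar Q, R+\bar R, S+\bar S)$, after which Assumption~\ref{coeffs} yields the required convexity. Your version is in fact more thorough than the paper's, which simply asserts ``strict convexity now follows from Assumption~\ref{coeffs}'' without spelling out the Schur-complement check or the coercivity estimate; your explicit discussion of the degenerate case $\delta_1=0$ and of the direct-method existence argument fills in details the paper leaves to the reader.
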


\begin{proof}
Observe that
$$
\bb{E}[\langle Q(s)X(s),\bar X(s)\rangle] = \bb{E}[\bb{E}[\langle Q(s)X(s),\bar X(s)\rangle |\sr{F}_s^0]]
= \bb{E}[\langle\bb{E}[ Q(s)X(s)|\sr{F}_s^0],\bar X(s)\rangle]
= \bb{E}[Q(s)\bar X(s),\bar X(s)\rangle],
$$
so that
\begin{multline*}
\bb{E}[\langle Q(s)X(s),X(s)\rangle + \langle \bar Q(s)\bar X(s),\bar X(s)]
\\
= \bb{E}[\langle Q(s)(X(s)-\bar X(s)),X(s)-\bar X(s)\rangle + \langle (Q(s) + \bar Q(s))\bar X(s),\bar X(s)].
\end{multline*}
In like manner, we have
\begin{multline*}
\bb{E}[\langle R(s)v(s),v(s)\rangle + \langle \bar R(s)\bar v(s),\bar v(s)]
\\
= \bb{E}[\langle R(s)(v(s)-\bar v(s)),v(s)-\bar v(s)\rangle + \langle (R(s) + \bar R(s))\bar v(s),\bar v(s)].
\end{multline*}
and
\begin{multline*}
\bb{E}[\langle S(s)X(s),v(s)\rangle + \langle \bar S(s)\bar X(s),\bar v(s)]
\\
= \bb{E}[\langle S(s)(X(s)-\bar X(s)),v(s)-\bar v(s)\rangle + \langle (S(s) + \bar S(s))\bar X(s),\bar v(s)].
\end{multline*}
Strict convexity now follows from Assumption \ref{coeffs}.
The existence and uniqueness of the minimizer follows from the weak lower semicontinuity of the functional.

\end{proof}

\subsection{Optimality conditions}

\begin{proposition} \label{mftc solution}
Suppose $v$ is an optimal control minimizing the functional $J_{X,t}(v)$, with corresponding trajectory $X(s)$ (the solution of \eqref{lq dynamics}).
Then there exists a unique adapted solution $(Y,Z,Z_0)$ of the BSDE
\begin{equation}
\label{adjoint}
\left\{
\begin{array}{rl}
dY(s) = & -\biggl(A^T(s)Y(s) + \bar A^T(s)\bar Y(s) + C^T(s)Z(s) + \bar C^T \bar Z(s) + F^T(s)Z_0(s) + \bar{F}^T(s)\bar Z_0(s)
\\
&  + Q(s)X(s) + \bar Q(s)\bar X(s) + S^T(s)v(s) + \bar S^T(s)\bar v(s) + q(s) + \bar q(s) \biggr)ds
\\
& + Z(s)dW(s) + Z_0(s)dW_0(s),
\ \ \ \ s \in [0,T],
\vspace{.2cm}
\\
Y(T) = & HX(T) + \bar H\bar X(T)
\end{array}
\right.
\end{equation}
satisfying the coupling condition
\begin{multline}
\label{coupling}
B^T(s)Y(s) + \bar{B}^T(s)\bar Y(s) + D^T(s)Z(s) + \bar{D}^T(s)\bar Z(s)
+ G^T(s)Z_0(s) + \bar{G}^T(s)\bar Z_0(s)
\\
 + R(s)v(s) + \bar R(s) \bar v(s)
 + S(s)X(s) + \bar S(s) \bar X(s) + r(s) + \bar r(s) = 0,
\ \ \ \ \ s \in [0,T], \ \text{a.s.}
\end{multline}
Here, as usual, $\bar Y(s) := \bb{E}[Y(s)|\sr{F}_s^0], \bar Z(s) := \bb{E}[Z(s)|\sr{F}_s^0],$ and $\bar Z_0(s) := \bb{E}[Z_0(s)|\sr{F}_s^0]$.

Conversely, suppose $(X,v,Y,Z)$ is an adapted solution to the forward-backward system \eqref{lq dynamics},\eqref{adjoint}.
Then $v$ is the optimal control minimizing $J_{X,t}(v)$, and $X(s)$ is the optimal trajectory.
\end{proposition}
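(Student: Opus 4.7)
The plan is classical Pontryagin-style: derive the necessary conditions by computing the G\^ateaux derivative of $J^{LQ}_{x,t}$ at the optimum and rewriting it using an adjoint process, then use the uniform convexity established in the previous lemma for the converse direction.

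For the necessary direction, I would fix an admissible perturbation $w$, set $v^\epsilon = v + \epsilon w$, and denote by $X^\epsilon$ the corresponding state. Standard estimates yield that $\delta X := \lim_{\epsilon \to 0} \epsilon^{-1}(X^\epsilon - X)$ solves the linearized system obtained by replacing $(X, v)$ with $(\delta X, w)$ on the right-hand side of \eqref{lq dynamics}, with $\delta X(t) = 0$, and that its $\sr{F}^0$-conditional expectation is $\overline{\delta X}$. Differentiating $J^{LQ}_{x,t}(v^\epsilon)$ in $\epsilon$, the key simplification is the identity $\bb{E}[\langle M \bar X, \delta X\rangle] = \bb{E}[\langle M \bar X, \overline{\delta X}\rangle]$ whenever $M$ is $\sr{F}_s^0$-measurable, which collapses the cross-terms between $X$-type and $\bar X$-type contributions. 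The result is
\begin{multline*}
\tfrac{1}{2}\tfrac{d}{d\epsilon}J^{LQ}_{x,t}(v^\epsilon)\big|_{\epsilon=0}
= \bb{E}\biggl\{\int_t^T \bigl[\langle QX + \bar Q\bar X + S^T v + \bar S^T \bar v + q + \bar q,\, \delta X\rangle \\
+ \langle Rv + \bar R\bar v + SX + \bar S \bar X + r + \bar r,\, w\rangle\bigr] ds + \langle HX(T) + \bar H \bar X(T),\, \delta X(T)\rangle\biggr\}.
\end{multline*}
I would then introduce $(Y, Z, Z_0)$ as the adapted solution of the mean-field BSDE \eqref{adjoint}; its well-posedness follows from the standard theory of linear McKean-Vlasov BSDEs (one can first condition on $\sr{F}^0$ to resolve $(\bar Y, \bar Z_0)$, then solve the resulting linear BSDE for $(Y, Z, Z_0)$). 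Applying It\^o's formula to $\langle Y(s), \delta X(s)\rangle$ on $[t, T]$, using $Y(T) = HX(T) + \bar H \bar X(T)$ and $\delta X(t) = 0$, and systematically applying the symmetrization identity above to match ``barred'' coefficients with conditional expectations (e.g.\ the $\bar A \overline{\delta X}$ term in the drift of $d\delta X$ pairs with $Y$ to produce $\bar A^T \bar Y$ acting on $\delta X$), all contributions proportional to $\delta X$ cancel exactly against the drift of $dY$, leaving
\begin{equation*}
\tfrac{1}{2}\tfrac{d}{d\epsilon}J^{LQ}_{x,t}(v^\epsilon)\big|_{\epsilon=0} = \bb{E}\int_t^T \langle \Lambda(s), w(s)\rangle\, ds,
\end{equation*}
where $\Lambda(s)$ is the left-hand side of \eqref{coupling}. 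Since $w$ is an arbitrary admissible perturbation, \eqref{coupling} must hold a.s.\ for a.e.\ $s$.

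For the converse, the previous lemma provides uniform convexity of $J^{LQ}_{x,t}$, so any critical point is automatically the unique global minimizer. If $(X, v, Y, Z, Z_0)$ solves the forward-backward system together with \eqref{coupling}, then the It\^o computation above, run for an arbitrary admissible direction $w$, shows that $\frac{d}{d\epsilon}J^{LQ}_{x,t}(v + \epsilon w)\big|_{\epsilon=0} = 0$, hence $v$ is optimal. The main obstacle I anticipate is the bookkeeping demanded by the McKean-Vlasov structure: because $\bar X, \bar v, \bar Y, \bar Z, \bar Z_0$ all enter both the state and the adjoint dynamics, one must invoke the tower-property identity $\bb{E}[\langle M \overline{\delta X}, Y\rangle] = \bb{E}[\langle M^T \bar Y, \delta X\rangle]$ repeatedly and in the correct places to recover precisely the ``barred'' summands appearing in the drift of \eqref{adjoint} and in \eqref{coupling}. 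A similar care is needed to make the existence/uniqueness argument for the mean-field adjoint BSDE airtight.
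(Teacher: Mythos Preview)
Your proposal is correct and follows essentially the same route as the paper: compute the G\^ateaux derivative, collapse the ``barred'' cross-terms via the tower property, invoke well-posedness of the linear McKean--Vlasov adjoint BSDE, and apply It\^o's formula to $\langle Y,\delta X\rangle$ so that the drift of \eqref{adjoint} cancels the $\delta X$-terms and only the coupling expression against the arbitrary perturbation survives; the converse then follows from (strict/uniform) convexity. The only cosmetic differences are that the paper cites \cite{buckdahn2009mean,buckdahn2009mean-pde} for the BSDE well-posedness rather than sketching the $\sr{F}^0$-conditioning argument, and it is slightly less explicit than you are about the repeated use of the tower-property identity.
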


\begin{proof}
The G\^ateaux derivative of $J^{LQ}_{x,t}(v)$ is 
\begin{multline} \label{Gateaux derivative}
\left.\frac{d}{dh}J^{LQ}_{x,t}(v + h\tilde v)\right|_{h=0}
= 2\bb{E}\biggl\{\int_t^T \bigl[\langle Q(s)X(s),\tilde X(s) \rangle  + \langle \bar Q(s)\bar X(s),\bar {\tilde X}(s) \rangle  
+ \langle R(s)v(s),\tilde v(s) \rangle 
\\
+ \langle \bar R(s) \bar v(s),\bar{\tilde v}(s) \rangle 
+ \langle S(s)X(s),\tilde v(s) \rangle
+ \langle S(s)\tilde X(s), v(s) \rangle
+ \langle \bar S(s)\bar X(s),\bar{\tilde v}(s) \rangle
+ \langle \bar S(s)\bar {\tilde X}(s),\bar{ v}(s) \rangle
\\
+ \langle q(s),\tilde X(s) \rangle + \langle \bar q(s),\bar{\tilde X}(s) \rangle 
+ \langle r(s),\tilde v(s) \rangle + \langle \bar r(s),\bar{\tilde v}(s) \rangle 
\bigr]ds
+ \langle HX(T),\tilde X(T) \rangle + \langle \bar H \bar X(T),\bar{\tilde X}(T) \rangle \biggr\}
\end{multline}
where $\tilde X$ is the solution of \eqref{lq dynamics} with $v$ replaced by $\tilde v$ and $X$ replaced by $0$.
If $(X,v)$ is optimal, then we get the optimality condition
\begin{multline}
\label{first order condition}
\bb{E}\biggl\{\int_t^T \bigl[\langle Q(s)X(s) + \bar Q(s)\bar X(s) + S^T(s)v(s) + \bar S^T(s)\bar v(s) + q(s) + \bar q(s),\tilde X(s) \rangle
\\
+ \langle R(s)v(s) + \bar R(s) \bar v(s) + S(s)X(s) + \bar S(s)\bar X(s) + r(s) + \bar r(s),\tilde v(s) \rangle \bigr]ds
\\
+ \langle HX(T) + \bar H \bar X(T),\tilde X(T) \rangle \biggr\} = 0
\end{multline}

Now by \cite{buckdahn2009mean,buckdahn2009mean-pde} we have a solution to the McKean-Vlasov type BSDE \eqref{adjoint}.
By the It\^o formula,
\begin{multline}
\bb{E}\langle Y(T),\tilde X(T) \rangle 
\\
= \bb{E} \int_t^T \langle B^T(s)Y(s) + \bar{B}^T(s)\bar Y(s) + D^T(s)Y(s) + \bar{D}^T(s)\bar Z(s) + G^T(s)Z_0(s) + \bar{G}^T(s)\bar Z_0(s),\tilde v(s)\rangle ds
\\
- \bb{E} \int_t^T \langle \tilde{X}(s),Q(s)X(s) + \bar Q(s)\bar X(s) + S^T(s)v(s) + \bar S^T(s)\bar v(s) + q(s) + \bar q(s)\rangle ds,
\end{multline}
which by using the optimality condition \eqref{first order condition} and the fact that $Y(T) =HX(T) + \bar H \bar X(T)$ becomes
\begin{multline}
0
= \bb{E} \int_t^T \langle B^T(s)Y(s) + \bar{B}^T(s)\bar Y(s) + D^T(s)Y(s) + \bar{D}^T(s)\bar Z(s) + G^T(s)Z_0(s) + \bar{G}^T(s)\bar Z_0(s),\tilde v(s)\rangle ds
\\
+ \bb{E} \int_t^T \langle R(s)v(s) + \bar R(s)\bar v(s) + S(s)X(s) + \bar S(s)\bar X(s) + r(s) + \bar r(s),\tilde v(s)\rangle ds.
\end{multline}
Since $\tilde v$ is arbitrary, we obtain the coupling condition \eqref{coupling}, as desired.

To prove the converse, it suffices to note that our assumptions imply that $J^{LQ}_{x,t}(\cdot)$ is strictly convex.
Then, given a solution $(X(s),v(s),Y(s),Z(s),Z_0(s))$ to the system \eqref{lq dynamics},\eqref{adjoint}, we know from \eqref{Gateaux derivative} that the G\^ateaux derivative of $J^{LQ}_{x,t}$ at $v$ is zero, which implies $v$ is the minimizer, as desired.
\end{proof}

\subsection{Riccati equations}

In order to find explicit solutions of the mean field type control problem, we derive a system of Riccati equations.
We use a technique developed by Yong in \cite{yong2013linear}.
We suppose
$$
Y(s) = P(s)(X(s)-\bar X(s)) + \Pi(s)\bar X(s) + \phi(s)
$$
where $P$ and $\Pi$ are $\s{S}^d$-valued processes such that
$$
P(T) = H, \ \Pi(T) = H + \bar H.
$$
and $\phi(s)$ is an $\bb{R}^d$-valued process.
Note that $P,\Pi,$ and $\phi$ are deterministic. Our goal is to derive a system of ordinary differential equations governing their evolution (backwards) in time.

By taking conditional expectation we have
$$
\bar Y(s) = \Pi(s)\bar X(s) + \phi(s)
\ \ \ \text{and} \ \ \
Y(s) - \bar Y(s) = P(s)(X(s) - \bar X(s)).
$$
Now
\begin{equation}
d\bar X = \left\{(A + \bar{A})\bar X + (B + \bar{B})\bar v \right\}ds
+ \left\{(F + \bar{F})\bar X + (G + \bar{G})\bar v\right\}dW_0,
\end{equation}
which implies
\begin{multline}
d(X-\bar X) = \left\{A(X - \bar X) + B(v - \bar v) \right\}ds
\\
+ \left\{C(X-\bar X) + (C+\bar{C})\bar X + D(v-\bar v) + (D+\bar{D})\bar v\right\}dW
+ \left\{F(X -\bar X) + G(v - \bar v)\right\}dW_0.
\end{multline}
Now recall that
\begin{multline} \label{adjoint dynamics}
dY = -\biggl(A^TY + \bar A^T\bar Y + C^TZ + \bar C^T \bar Z
 + F^TZ_0 + \bar{F}^T\bar Z_0 + QX + \bar Q\bar X
 \\
 + S^Tv + \bar S^T \bar v + q + \bar q \biggr)ds
 + ZdW + Z_0dW_0
 \\
= -\biggl(A^T(Y-\bar Y)
 + (A^T + \bar A^T)\bar Y
  + C^T(Z-\bar Z) + (C^T+\bar C^T) \bar Z
 + F^T(Z_0-\bar Z_0) + (F^T+\bar{F}^T)\bar Z_0
 \\ + Q(X-\bar X) + (Q +\bar Q)\bar X 
 + S^Tv + \bar S^T \bar v + q + \bar q \biggr)ds
 + ZdW + Z_0dW_0.
\end{multline}
On the other hand,
\begin{multline} \label{d(Y-EY)}
d(Y-\bar Y) = \dot P(X-\bar X)ds
+ Pd(X-\bar X)
\\
= \left\{\dot P(X-\bar X)
+ PA(X - \bar X) + PB(v - \bar v) \right\}ds
\\
+ P\biggl\{C(X-\bar X) + (C+\bar{C})\bar X +
 D(v-\bar v) + (D+\bar{D})\bar v\biggr\}dW
+ P\left\{F(X -\bar X) + G(v - \bar v)\right\}dW_0
\end{multline}
while
\begin{multline} \label{dEY}
d\bar Y = (\dot \phi + \dot \Pi\bar X)ds + \Pi d\bar X
\\
= \left\{\dot \phi + \dot \Pi\bar X + \Pi(A + \bar{A})\bar X + \Pi(B + \bar{B})\bar v \right\}ds
+ \Pi\left\{(F + \bar{F})\bar X + (G + \bar{G})\bar v\right\}dW_0.
\end{multline}
Note that $dY = d(Y-\bar Y) + d\bar Y$.
By comparing the diffusion terms, we get
\begin{equation}
Z = P\biggl\{C(X-\bar X) + (C+\bar{C})\bar X +
D(v-\bar v) + (D+\bar{D})\bar v\biggr\}
\end{equation}
and
\begin{equation}
Z_0 = P\left\{F(X -\bar X) + G(v - \bar v)\right\}
+ \Pi\left\{(F + \bar{F})\bar X + (G + \bar{G})\bar v\right\}
\end{equation}
which imply
\begin{equation} \label{EZ}
\bar Z = P\left\{ (C+\bar{C})\bar X +
 (D+\bar{D})\bar v\right\},
\end{equation}
\begin{equation} \label{Z-EZ}
Z-\bar Z = P\left\{C(X-\bar X) + 
 D(v-\bar v) \right\},
\end{equation}
\begin{equation} \label{EZ_0}
\bar Z_0 = \Pi\left\{(F + \bar{F})\bar X + (G + \bar{G})\bar v\right\},
\end{equation}
and
\begin{equation} \label{Z-EZ_0}
Z_0-\bar Z_0 = P\left\{F(X -\bar X) + G(v - \bar v)\right\}.
\end{equation}
Next we use the coupling condition \eqref{coupling} to find a formula for $v(s)$.
We have
\begin{multline}
0 = B^T(Y-\bar Y) + (B^T+\bar{B}^T)\bar Y 
+ D^T(Z-\bar Z) + (D^T+\bar{D}^T)\bar Z
+ G^T(Z_0-\bar Z_0) + (G^T+\bar{G}^T)\bar Z_0 
\\+ R(v-\bar v) + (R+\bar R)\bar v
+ S(X - \bar X) + (S + \bar S)\bar X + r + \bar r
\\
= B^TP(X-\bar X) + (B^T+\bar{B}^T)\Pi\bar X + (B^T+\bar{B}^T)\phi
+ D^T P\left\{C(X-\bar X) + D(v-\bar v) \right\} 
 \\+ (D^T+\bar{D}^T)P\left\{ (C+\bar{C})\bar X + (D+\bar{D})\bar v\right\}
+ G^T P\left\{F(X -\bar X) + G(v - \bar v)\right\}
\\+ (G^T+\bar{G}^T)\Pi\left\{(F + \bar{F})\bar X + (G + \bar{G})\bar v\right\} 
+ R(v-\bar v) + (R+\bar R)\bar v
+ S(X - \bar X) + (S + \bar S)\bar X + r + \bar r
\\
= \Lambda_0(X-\bar X)
+ \Lambda_1
\bar X
+ \Sigma_0(v-\bar v)
+ \Sigma_1
\bar v
+ (B^T+\bar{B}^T)\phi + r + \bar r
\end{multline}
with
\begin{align*}
\Lambda_0 &= B^TP+ D^TPC + G^TPF + S,\\
\Lambda_1 &=(B^T+\bar{B}^T)\Pi+ (D^T+\bar{D}^T)P(C+\bar{C}) + (G^T+\bar{G}^T)\Pi(F + \bar{F}) + S + \bar S,\\
\Sigma_0 &= D^TPD + R,\\
\Sigma_1 &= (D^T+\bar{D}^T)P(D+\bar{D}) + (G^T+\bar{G}^T)\Pi(G + \bar{G}) + (R+\bar R).
\end{align*}
Taking conditional expectation, we deduce
\begin{equation}
\Sigma_1(s)\bar v(s) + \Lambda_1(s)\bar X(s)
+ r(s) + \bar r(s) + (B^T(s)+\bar{B}^T(s))\phi(s)
 = 0
\end{equation}
so that, assuming $\Sigma_1(s)$ is invertible,
\begin{equation} \label{Ev}
\bar v(s) = -\Sigma_1(s)^{-1}(\Lambda_1(s)
\bar X(s) + r(s) + \bar r(s) + (B^T(s)+\bar{B}^T(s))\phi(s)).
\end{equation}
Assuming $\Sigma_0(s)$ is also invertible, we therefore have
\begin{multline} \label{v}
v(s) = v(s)-\bar v(s) + \bar v(s)
\\
= -\Sigma_0(s)^{-1}(\Lambda_0(s)(X(s)-\bar X(s))
+ \Lambda_1(s)
\bar X(s) + \Sigma_1(s)
\bar v(s)
+ r(s) + \bar r(s) + (B^T(s)+\bar{B}^T(s))\phi(s))
+ \bar v(s)
\\
= -\Sigma_0(s)^{-1}\Lambda_0(s)(X(s)-\bar X(s))
-\Sigma_1(s)^{-1}(\Lambda_1(s)
\bar X(s) + r(s) + \bar r(s) + (B^T(s)+\bar{B}^T(s))\phi(s)).
\end{multline}

Now we compare the drift terms from \eqref{adjoint dynamics} to those of \eqref{d(Y-EY)} and \eqref{dEY}.
Using the relations \eqref{EZ},\eqref{Z-EZ},\eqref{EZ_0},\eqref{Z-EZ_0},\eqref{Ev}, and \eqref{v} proved above, we get
\begin{multline}
0
= \biggl(\dot P
+A^TP + PA + C^TPC + F^T P F + Q
- (PB + C^TPD + F^TPG + S^T)\Sigma_0^{-1}\Lambda_0 \biggr)(X-\bar X)
\\
+ \biggl(\dot \Pi
+(A^T+\bar A^T)\Pi
 + \Pi(A + \bar{A})
 +(C^T+\bar C^T) P(C+\bar{C})
 +(F^T+\bar{F}^T)\Pi(F + \bar{F})
 + Q +\bar Q
 \\
 - (\Pi(B+\bar B) + (C^T + \bar C^T)P(D + \bar D) + (F^T + \bar F^T)\Pi(G+\bar G) + S^T + \bar S^T)\Sigma_1^{-1}\Lambda_1\biggr)\bar X
 \\ 
 + \dot \phi 
 - (\Pi(B+\bar B) + (C^T + \bar C^T)P(D + \bar D) + (F^T + \bar F^T)\Pi(G+\bar G) + S^T + \bar S^T)\Sigma_1^{-1}(r+ \bar r) + q + \bar q.
\end{multline}
We deduce that $P$ and $\Pi$ should satisfy the following Riccati equations:
\begin{multline}
\label{P equation}
\left\{
\begin{array}{l}
\dot P
+A^TP
+ PA +C^TPC + F^TPF
+Q \vspace{.2cm}
\\ \ \
- (PB+C^TPD+F^TPG+S^T)(D^TPD + R)^{-1}(B^TP + D^TPC+ G^TPF + S) = 0, \vspace{.2cm}
\\
P(T) = H
\end{array}\right.
\end{multline}
and
\begin{multline}
\label{Pi equation}
\left\{
\begin{array}{l}
\dot \Pi
+(A^T+\bar A^T)\Pi
 + \Pi(A + \bar{A})
 +(C^T+\bar C^T) P(C+\bar{C})
  +(F^T+\bar{F}^T)\Pi(F + \bar{F})
   +(Q +\bar Q) \vspace{.2cm}
\\ \ \ - \biggl(\Pi(B + \bar{B})+(C^T+\bar C^T) P(D+\bar{D})+(F^T+\bar{F}^T)\Pi(G + \bar{G}) + S^T + \bar S^T\biggr)\Sigma_1^{-1}
\\ \ \ \ \ \ \
\cdot \biggl((B + \bar{B})^T\Pi+ (D+\bar{D})^TP(C+\bar C)+(G + \bar{G})^T\Pi(F+\bar{F}) + S + \bar S\biggr)  = 0, \vspace{.2cm}
\\ \ \ \Sigma_1 = (D^T+\bar{D}^T)P(D+\bar{D}) + (G^T+\bar{G}^T)\Pi(G + \bar{G}) + (R+\bar R), \vspace{.2cm}
\\
\Pi(T) = H + \bar H.
\end{array}\right.
\end{multline}
Once we have $P,\Pi$ solutions to \eqref{P equation},\eqref{Pi equation}, respectively, we set
\begin{equation*}
\phi(s) = \int_t^s \biggl\{(\Pi(B+\bar B) + (C^T + \bar C^T)P(D + \bar D) + (F^T + \bar F^T)\Pi(G+\bar G) + S^T + \bar S^T)\Sigma_1^{-1}(r+ \bar r) + q + \bar q \biggr\}d\tau.
\end{equation*}
A standard reference on optimal control, e.g.~\cite{yong1999stochastic}, suffices to show that under the given assumptions \eqref{P equation} has a unique solution, which in addition is symmetric.
To see that \eqref{Pi equation} has a unique solution, we note that $(C^T+\bar C^T) P(C+\bar{C})$ is also a symmetric matrix, and therefore by the same reference we can deduce there exists a unique solution of \eqref{Pi equation}, which is also symmetric.

We can deduce the dynamics of the optimal trajectory using \eqref{v}.
\begin{multline} \label{X}
dX = \biggl\{A(X-\bar X) + (A+\bar{A})\bar X
 + B(v-\bar v) + (B +\bar{B})\bar v \biggr\}ds
 \\
+ \biggl\{C(X-\bar X) + (C+\bar{C})\bar X
 + D(v-\bar v) + (D+\bar{D})\bar v\biggr\}dW
\\
+ \biggl\{F(X-\bar X) + (F+\bar{F})\bar X 
+ G(v-\bar v) + (G+\bar{G})\bar v\biggr\}dW_0
\\ = \biggl\{(A- B\Sigma_0^{-1}\Lambda_0)(X-\bar X)
 + (A+\bar{A}- (B +\bar{B})\Sigma_1^{-1}\Lambda_1)\bar X 
 - (B +\bar{B})\Sigma_1^{-1}(r+\bar r)
 \biggr\}ds
\\
+ \biggl\{(C- D\Sigma_0^{-1}\Lambda_0)(X-\bar X)
 + (C+\bar{C}-(D+\bar{D})\Sigma_1^{-1}\Lambda_1)\bar X 
 -(D+\bar{D})\Sigma_1^{-1}(r+\bar r)
 \biggr\}dW
\\
+ \biggl\{(F- G\Sigma_0^{-1}\Lambda_0)(X-\bar X)
 + (F+\bar{F}-(G+\bar{G})\Sigma_1^{-1}\Lambda_1)\bar X 
 -(G+\bar{G})\Sigma_1^{-1}(r+\bar r)
 \biggr\}dW_0.
\end{multline}
A formula for the process $Z$ can also be deduced from \eqref{Z-EZ}, \eqref{EZ}, and \eqref{v}:
\begin{multline}
Z = Z-\bar Z + \bar Z
 = P\left\{C - 
  D\Sigma_0^{-1}\Lambda_0 \right\}(X-\bar X)
  \\
  + P\left\{ (C+\bar{C}) -
   (D+\bar{D})\Sigma_1^{-1}\Lambda_1\right\}\bar X - P(D+\bar D)\Sigma_1^{-1}(r+\bar r),
\end{multline}
and for $Z_0$ we get the following by using \eqref{Z-EZ_0} and \eqref{EZ_0}:
\begin{multline}
Z_0 = Z-\bar Z + \bar Z_0 
= P\left\{F - 
 G\Sigma_0^{-1}\Lambda_0 \right\}(X-\bar X)
 \\
+ \Pi\left\{(F + \bar{F}) - (G + \bar{G})\Sigma_1^{-1}\Lambda_1\right\}\bar X
- \Pi(G + \bar{G})\Sigma_1^{-1}(r+ \bar r).
\end{multline}
We summarize our results here:
\begin{theorem} \label{riccati solver}
There exists a unique solution $P,\Pi$ to the pair of Riccati equations \eqref{P equation} and \eqref{Pi equation}, where $P,\Pi$ are both $\s{S}^d$-valued deterministic processes.
Moreover, the unique optimal trajectory for Problem \eqref{LQ optimal control problem} is given by the solution to the SDE
\begin{multline} \label{closed loop dynamics}
dX  = \biggl\{(A- B\Sigma_0^{-1}\Lambda_0)(X-\bar X)
 + (A+\bar{A}- (B +\bar{B})\Sigma_1^{-1}\Lambda_1)\bar X 
 - (B +\bar{B})\Sigma_1^{-1}(r+\bar r)
 \biggr\}ds
\\
+ \biggl\{(C- D\Sigma_0^{-1}\Lambda_0)(X-\bar X)
 + (C+\bar{C}-(D+\bar{D})\Sigma_1^{-1}\Lambda_1)\bar X 
 -(D+\bar{D})\Sigma_1^{-1}(r+\bar r)
 \biggr\}dW
\\
+ \biggl\{(F- G\Sigma_0^{-1}\Lambda_0)(X-\bar X)
 + (F+\bar{F}-(G+\bar{G})\Sigma_1^{-1}\Lambda_1)\bar X 
 -(G+\bar{G})\Sigma_1^{-1}(r+\bar r)
 \biggr\}dW_0.
\end{multline}
with
\begin{align*}
\Lambda_0 &= B^TP+ D^TPC + G^TPF + S,\\
\Lambda_1 &=(B^T+\bar{B}^T)\Pi+ (D^T+\bar{D}^T)P(C+\bar{C}) + (G^T+\bar{G}^T)\Pi(F + \bar{F}) + S + \bar S,\\
\Sigma_0 &= D^TPD + R,\\
\Sigma_1 &= (D^T+\bar{D}^T)P(D+\bar{D}) + (G^T+\bar{G}^T)\Pi(G + \bar{G}) + (R+\bar R).
\end{align*}
The SDE \eqref{closed loop dynamics} has a unique solution.
The optimal control is given by
\begin{equation} \label{the optimal control}
v(s) 
= -\Sigma_0(s)^{-1}\Lambda_0(s)(X(s)-\bar X(s))
-\Sigma_1(s)^{-1}(\Lambda_1(s)
\bar X(s) + r(s) + \bar r(s) + (B^T(s)+\bar{B}^T(s))\phi(s)).
\end{equation}
If we define the adjoint processes
\begin{align*}
Y(s) &= P(s)(X(s)-\bar X(s)) + \Pi(s)\bar X(s) + \phi(s),\\
Z(s) &= P\left\{C - 
  D\Sigma_0^{-1}\Lambda_0 \right\}(X-\bar X)
  + P\left\{ (C+\bar{C}) -
   (D+\bar{D})\Sigma_1^{-1}\Lambda_1\right\}\bar X - P(D+\bar D)\Sigma_1^{-1}(r+\bar r),\\
Z_0(s) &= P\left\{F - 
 G\Sigma_0^{-1}\Lambda_0 \right\}(X-\bar X)
+ \Pi\left\{(F + \bar{F}) - (G + \bar{G})\Sigma_1^{-1}\Lambda_1\right\}\bar X
- \Pi(G + \bar{G})\Sigma_1^{-1}(r+ \bar r)
\end{align*}
where
\begin{equation*}
\phi(s) = \int_t^s \biggl\{(\Pi(B+\bar B) + (C^T + \bar C^T)P(D + \bar D) + (F^T + \bar F^T)\Pi(G+\bar G) + S^T + \bar S^T)\Sigma_1^{-1}(r+ \bar r) + q + \bar q \biggr\}d\tau,
\end{equation*}
then the quintuple $(X,v,Y,Z,Z_0)$ is an adapted solution to the mean field FBSDE \eqref{lq dynamics},\eqref{adjoint}.
\end{theorem}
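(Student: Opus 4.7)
The plan is to prove the theorem in three stages: (i) establish existence, uniqueness, and symmetry of $P$ and $\Pi$ from the two Riccati equations; (ii) construct the candidate optimal trajectory via the closed-loop SDE and define the candidate adjoint processes; (iii) verify by It\^o's formula that the quintuple satisfies the FBSDE system of Proposition \ref{mftc solution}, and conclude optimality by invoking the converse direction of that proposition.

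For stage (i), equation \eqref{P equation} is a standard deterministic matrix Riccati equation associated to a stochastic LQ control problem. Under Assumption \ref{coeffs}, namely $H \geq 0$, $Q \geq \delta_1 I$, $R \geq \delta_2 I$, and $\|S\|_\infty^2 < \delta_1 \delta_2$, a classical result (e.g.\ Chapter 6 of \cite{yong1999stochastic}) gives a unique global solution $P \in C([0,T];\mathcal{S}^d)$ with $P \geq 0$ on $[0,T]$, and hence $\Sigma_0 = D^T P D + R \geq \delta_2 I$ is uniformly invertible. Once $P$ is known, equation \eqref{Pi equation} is, structurally, another matrix Riccati equation for $\Pi$ in which the quantity $(C^T+\bar C^T)P(C+\bar C)$ plays the role of an additional forcing term; because $P$ is symmetric this term is symmetric, so the equation is again of standard form. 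With terminal condition $H + \bar H \geq 0$, $Q + \bar Q \geq \delta_1 I$, $R + \bar R \geq \delta_2 I$, and $\|S+\bar S\|_\infty^2 < \delta_1 \delta_2$, the same reference yields a unique global symmetric solution $\Pi$, and $\Sigma_1 \geq \delta_2 I$ is uniformly invertible. Given $P$ and $\Pi$, the function $\phi$ is defined by a Lebesgue integral of a bounded integrand and is therefore well-defined and continuous.

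For stage (ii), the closed-loop SDE \eqref{closed loop dynamics} has uniformly bounded, time-measurable coefficients that are affine in $(X-\bar X, \bar X)$; after noting that the $\bar X$-coefficients are obtained by conditioning on $\mathcal{F}^0_s$, standard McKean--Vlasov SDE theory (as invoked for \eqref{lq dynamics} earlier) gives a unique square-integrable adapted solution $X$. Define $v$ by \eqref{the optimal control}, and note by inspection that $\bar v$ coincides with \eqref{Ev}; the resulting pair $(X,v)$ then satisfies the original dynamics \eqref{lq dynamics}, since \eqref{closed loop dynamics} is nothing but \eqref{lq dynamics} with $v$ replaced by the feedback expression.

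For stage (iii), which is where the bookkeeping lies, I set $Y := P(X-\bar X) + \Pi \bar X + \phi$ and apply It\^o's formula. Taking the conditional expectation separates $Y$ into the centered part $P(X-\bar X)$ and the common-noise-adapted part $\Pi\bar X + \phi$; expanding $d(X - \bar X)$ and $d\bar X$ and collecting terms shows that the diffusion coefficients of $Y$ are exactly the expressions claimed for $Z$ and $Z_0$, while the drift is precisely the right-hand side of \eqref{adjoint} once the Riccati equations \eqref{P equation} and \eqref{Pi equation} are substituted to cancel the purely $(X-\bar X)$ and $\bar X$ terms, and the defining ODE for $\phi$ is used to cancel the constant-in-state terms. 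The coupling condition \eqref{coupling} is verified by direct substitution: plugging $Y, Z, Z_0, \bar Y, \bar Z, \bar Z_0$ and $v, \bar v$ into the left-hand side produces exactly the identities used to derive \eqref{v} and \eqref{Ev}. Having verified the full FBSDE system \eqref{lq dynamics}--\eqref{coupling}, the converse part of Proposition \ref{mftc solution} yields that $v$ is the unique optimal control, completing the proof.

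The main obstacle is the combinatorial bookkeeping in stage (iii): several groups of drift terms (containing $P, \Pi, \Sigma_0^{-1}, \Sigma_1^{-1}, \Lambda_0, \Lambda_1$) have to cancel in a precise way, and one must carefully separate contributions from $X - \bar X$ (driven by both $W$ and $W_0$) from those of $\bar X$ (driven only by $W_0$) so that the martingale representation terms $Z$ and $Z_0$ come out correctly. A secondary subtlety is confirming that the Riccati equation for $\Pi$ is genuinely of the form to which the classical existence theorem applies, once $P$ is treated as a given symmetric coefficient.
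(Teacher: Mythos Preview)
Your proposal is correct and follows essentially the same approach as the paper: the paper's argument is the derivation in Section~2.2 preceding the theorem (posit the ansatz $Y = P(X-\bar X) + \Pi\bar X + \phi$, match diffusion and drift terms via It\^o's formula to obtain the Riccati equations and the feedback law, then cite \cite{yong1999stochastic} for solvability of \eqref{P equation} and, noting $(C^T+\bar C^T)P(C+\bar C)$ is symmetric, for \eqref{Pi equation} as well). Your write-up reorganizes this as ``existence of $P,\Pi$ $\to$ construction $\to$ verification of the FBSDE $\to$ invoke Proposition~\ref{mftc solution},'' which is logically the same content presented in the reverse (and arguably cleaner) order.
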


\section{The Mean Field Game} \label{sec:mfg}

In this section, we consider the problem of Nash equilibrium rather than optimal control.
We modify the dynamics \eqref{lq dynamics} as follows.
Let $\bar \xi(s)$ and $\bar \psi(s)$ be given processes adapted to the filtration $\{\sr{F}_s^0\}_{s \geq t}$.
Consider
\begin{multline} \label{lq mfg dynamics}
dX(s) = \left\{A(s)X(s) + \bar{A}(s)\bar \xi(s) + B(s)v(s) + \bar{B}(s)\bar \psi(s) \right\}ds
\\
+ \left\{C(s)X(s) + \bar{C}(s)\bar \xi(s) + D(s)v(s) + \bar{D}(s)\bar \psi(s)\right\}dW(s)
\\
+ \left\{F(s)X(s) + \bar{F}(s)\bar \xi(s) + G(s)v(s) + \bar{G}(s)\bar \psi(s)\right\}dW_0(s), \ \ X(t) = x
\end{multline}
and the objective functional
\begin{multline} \label{mfg objective functional}
J^{mfg}_{x,t}(v;\bar \xi,\bar \psi) = \bb{E}\biggl\{\int_t^T \bigl[\langle Q(s)X(s),X(s) \rangle  + 2\langle \bar Q(s)\bar \xi(s),X(s) \rangle  
+ \langle R(s)v(s),v(s) \rangle + 2\langle \bar R(s) \bar \psi(s),v(s) \rangle
\\
+ 2\langle S(s)X(s),v(s) \rangle + 2\langle \bar S_1(s) \bar \xi(s),v(s) \rangle + 2\langle \bar S_2(s) X(s),\bar \psi(s) \rangle
\\
+ 2\langle q(s),X(s)\rangle + 2\langle \bar q(s), \bar \xi(s)\rangle
+ 2\langle r(s),v(s) \rangle + 2\langle \bar r(s), \bar \psi(s) \rangle \bigr]ds
+ \langle HX(T),X(T) \rangle + 2\langle \bar H \bar \xi(T),X(T) \rangle \biggr\}.
\end{multline}
The goal is to find a process $\hat v(s)$ such that, given the process $\hat X(s)$ generated by $\hat v$, we have
\begin{equation}
\label{Nash equilibrium condition}
J_{X,t}^{NE}(\hat v;\bar \xi,\bar \psi) = \inf_{v} J_{X,t}^{NE}(v;\bar \xi,\bar \psi) \ \ \text{and} \ \ 
\bb{E}[\hat X(s)|\sr{F}_s^0] = \bar \xi, \ 
\bb{E}[\hat v(s)|\sr{F}_s^0] = \bar \psi.
\end{equation}
Such a process $\hat v$ is called a mean field Nash equilibrium.
Note that, rather than an optimizer, we are seeking a \textit{fixed point} of the map
$$
v \mapsto (X,v) \mapsto (\bar \xi,\bar \psi) \mapsto \hat v,
$$
where any given control $v$ generates a state process $X$, $(X,v)$ generate processes $(\bar \xi,\bar \psi)$ by taking conditional expectation with respect to the common noise, and $\hat v$ is an optimal control with respect to these given processes.

We make similar assumptions as before:
\begin{assumption}
The following are the assumption on the coefficient matrices for the mean field game:
\begin{enumerate}
\item $A,\bar A,C, \bar C, F, \bar F \in L^\infty(0,T;\bb{R}^{d \times d})$
\item $B, \bar B,D, \bar D, G, \bar G \in L^\infty(0,T;\bb{R}^{d \times m})$
\item $Q, \bar Q \in L^\infty(0,T;\s{S}^d), \ R,\bar R \in L^\infty(0,T;\s{S}^m), \ H,\bar H \in \s{S}^d$
\item $H \geq 0$, and for some $\delta_1 \geq 0, \delta_2 > 0$, $Q \geq \delta_1 I$ and $R \geq \delta_2 I$
\item $S,\bar S_1, \bar S_2 \in L^\infty(0,T;\bb{R}^{m \times d}); q, \bar q \in L^\infty(0,T;\bb{R}^d); r, \bar r \in L^\infty(0,T;\bb{R}^m)$
\item $\|S\|_\infty^2 < \delta_1 \delta_2$ if $\delta_1 \neq 0$, $S = 0$ otherwise.
\end{enumerate}
\end{assumption}

Our goal is not to find the most general conditions under which we may solve this fixed point problem (cf.~\cite{ahuja2016wellposedness,carmona2014mean}).
Indeed, the fact that the conditional expectation \textit{of the control variable} appears in the objective functional (in equilibrium) seems to complicate matters considerably.
In the first part of this section, we will give a brief discussion of where the difficulty lies.
In the following subsections, we will focus more on the case which is of most interest to us, namely when the Nash equilibrium can be computed by finding an optimizer to a mean field type control problem.

Let us first see that, for given processes $(\bar \xi,\bar \psi)$, there is indeed an optimal control $\hat v$.
Using the same arguments as in the previous section, we obtain the following characterization.
\begin{proposition} \label{mfg solution}
For a given pair $(\bar \xi, \bar \pi)$, there exists a unique optimal control $v$ minimizing the functional $J_{X,t}^{NE}(v;\bar \xi,\bar \psi)$.
Furthermore, let $X(s)$ be the corresponding trajectory (the solution of \eqref{lq mfg dynamics} with control $v(s)$).
Then there exists a unique adapted solution $(Y,Z,Z_0)$ of the BSDE
\begin{equation}
\label{adjoint mfg}
\left\{
\begin{array}{rl}
dY(s) = & -\biggl(A^T(s)Y(s) + C^T(s)Z(s) + F^T(s)Z_0(s) + 
\\
&  + Q(s)X(s) + \bar Q(s)\bar \xi(s) + S^T(s)v(s) + \bar S_2^T(s)\bar \psi(s) + q(s) \biggr)ds
\\
& + Z(s)dW(s) + Z_0(s)dW_0(s),
\ \ \ \ s \in [0,T],
\vspace{.2cm}
\\
Y(T) = & HX(T) + \bar H\bar \xi(T)
\end{array}
\right.
\end{equation}
satisfying the coupling condition
\begin{multline}
\label{coupling mfg}
B^T(s)Y(s) + D^T(s)Z(s)
+ G^T(s)Z_0(s)
\\
 + R(s)v(s) + \bar R(s) \bar \psi(s)
 + S(s)X(s) + \bar S_1(s) \bar \xi(s) + r(s) = 0,
\ \ \ \ \ s \in [0,T], \ \text{a.s.}
\end{multline}

Conversely, suppose $(X,v,Y,Z)$ is an adapted solution to the forward-backward system \eqref{lq mfg dynamics},\eqref{adjoint mfg} and coupling condition \eqref{coupling mfg}.
Then $v$ is the optimal control minimizing $J_{X,t}^{NE}(v;\bar \xi,\bar \psi)$, and $X(s)$ is the optimal trajectory.
If, in addition, we have $\bb{E}[X(s)|\sr{F}_s^0] = \bar \xi(s)$ and $\bb{E}[v(s)|\sr{F}_s^0] = \bar \psi(s)$ then $v(s)$ is a mean field Nash equilibrium.
\end{proposition}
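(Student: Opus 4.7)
The plan is to follow the same template as Proposition \ref{mftc solution}, with the crucial simplification that the processes $\bar \xi$ and $\bar \psi$ are \emph{given} exogenously rather than being endogenously determined by the state and control. From the perspective of the representative player, the problem therefore reduces to a standard (non-McKean-Vlasov) linear-quadratic stochastic control problem with inhomogeneous forcing; this is what makes the adjoint BSDE \eqref{adjoint mfg} a classical BSDE rather than a mean-field one.

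First I would establish existence and uniqueness of the minimizer by the direct method. Since $\bar \xi$ and $\bar \psi$ do not depend on $v$, every term in \eqref{mfg objective functional} that contains them is at most linear in $(X,v)$, so the quadratic part of $J^{mfg}_{x,t}(\cdot;\bar\xi,\bar\psi)$ is exactly
\[
\bb{E}\!\int_t^T\!\bigl[\langle Q(s)X(s),X(s)\rangle + \langle R(s)v(s),v(s)\rangle + 2\langle S(s)X(s),v(s)\rangle\bigr]ds + \bb{E}\langle HX(T),X(T)\rangle.
\]
The assumptions $H\geq 0$, $Q\geq \delta_1 I$, $R\geq \delta_2 I$, and $\|S\|_\infty^2<\delta_1\delta_2$ (or $S=0$ if $\delta_1=0$) yield uniform convexity in $v$, after which weak lower semicontinuity of the affine part gives a unique minimizer.

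For the necessary conditions, I would compute the G\^ateaux derivative at $v$ in a direction $\tilde v$, producing a variation $\tilde X$ that satisfies the linearized SDE \textbf{without} any $\bar{\tilde X}$ terms, precisely because $\bar\xi$ and $\bar\psi$ are frozen. Setting the derivative equal to zero produces a first-order condition analogous to \eqref{first order condition}. I would then invoke classical BSDE theory (not the McKean-Vlasov extension, since no conditional expectations of $Y,Z,Z_0$ appear on the right-hand side of \eqref{adjoint mfg}) to obtain a unique adapted triple $(Y,Z,Z_0)$ solving \eqref{adjoint mfg}. Applying It\^o's formula to $\langle Y(\cdot),\tilde X(\cdot)\rangle$, using the terminal value $Y(T)=HX(T)+\bar H\bar\xi(T)$, and substituting the first-order condition forces
\[
\bb{E}\!\int_t^T\!\langle B^T Y + D^T Z + G^T Z_0 + Rv + \bar R\bar\psi + SX + \bar S_1\bar\xi + r,\tilde v\rangle\,ds = 0
\]
for every admissible $\tilde v$, which yields the pointwise coupling \eqref{coupling mfg}.

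The converse is immediate from strict convexity: any quintuple $(X,v,Y,Z,Z_0)$ solving \eqref{lq mfg dynamics}, \eqref{adjoint mfg}, and \eqref{coupling mfg} makes the G\^ateaux derivative vanish, hence $v$ is the global minimizer. The Nash equilibrium claim is then a tautology: if in addition the consistency conditions $\bb{E}[X(s)|\sr{F}_s^0]=\bar\xi(s)$ and $\bb{E}[v(s)|\sr{F}_s^0]=\bar\psi(s)$ hold, then by definition \eqref{Nash equilibrium condition} is satisfied. The main (mild) obstacle is bookkeeping — keeping careful track of which terms involve $\bar S_1,\bar S_2,\bar R,\bar Q,\bar H$ so that they land on the correct side of the first-order condition, and noting that the asymmetry between the two $\bar S$ matrices (one multiplying $\bar\xi$ against $v$, the other multiplying $X$ against $\bar\psi$) is what forces the appearance of $\bar S_2^T \bar\psi$ in the driver of \eqref{adjoint mfg} while $\bar S_1\bar\xi$ appears in the coupling \eqref{coupling mfg}.
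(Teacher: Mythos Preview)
Your proposal is correct and follows exactly the approach the paper intends: the paper does not give a separate proof for this proposition but simply states ``Using the same arguments as in the previous section, we obtain the following characterization,'' i.e.\ it defers entirely to the proof of Proposition~\ref{mftc solution}. Your write-up makes explicit the key simplification the paper leaves implicit---that with $\bar\xi,\bar\psi$ frozen the problem is a classical LQ problem, so the adjoint BSDE is standard rather than of McKean--Vlasov type---and your bookkeeping remark about where $\bar S_1$ versus $\bar S_2^T$ lands is exactly the only subtlety worth noting.
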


Proposition \ref{mfg solution} can be seen as an abstract condition for the solvability of the fixed point problem.
However, one would like to have a more concrete criterion giving existence of a mean field Nash equilibrium.
Let us attempt to follow the discussion in \cite{bensoussan2011linear} and find out where the difficulty lies.

Suppose there exists a mean field Nash equilibrium $v(s)$.
Then $\bar X(s) = \bb{E}[X(s)| \sr{F}_s^0]$ satisfies the dynamics
\begin{equation} \label{bar X dynamics}
d\bar X = \left\{(A + \bar{A})\bar X + (B + \bar{B})\bar v \right\}ds
+ \left\{(F + \bar{F})\bar X + (G + \bar{G})\bar v\right\}dW_0, \ \ \bar X(t) = \bar X
\end{equation}
while $\bar Y$ satisfies
\begin{equation} \label{bar Y dynamics}
d\bar Y =  -\biggl(A^T \bar Y + C^T \bar Z + F^T \bar Z_0 + (Q + \bar Q)\bar X 
  + (S^T + \bar S_2^T)\bar v + q \biggr)ds + \bar Z_0dW_0,
\ \ \
\bar Y(T) =  (H + \bar H)\bar \xi(T)
\end{equation}
and we have the coupling condition
\begin{equation} \label{bar v coupling}
B^T\bar Y + D^T\bar Z + G^T \bar Z_0 
 + (R + \bar R) \bar v
 + (S + \bar S_1) \bar X + r = 0.
\end{equation}
Conversely, suppose that the system \eqref{bar X dynamics},\eqref{bar Y dynamics},\eqref{bar v coupling} has a solution which we denote $(\bar \xi,\bar \eta,\bar \psi)$ (corresponding to $(\bar X,\bar Y,\bar v)$).
Then we let $v$ be the optimal control minimizing $J_{X,t}^{NE}(v;\bar \xi,\bar \psi)$, and let $X(s)$ be the state solving the dynamics \eqref{lq mfg dynamics} and $Y(s)$ the adjoint state solving \eqref{adjoint mfg}.
Note that, by Proposition \ref{mfg solution}, the coupling condition \eqref{coupling mfg} is satisfied.
Now, if we knew that $\bar v(s) := \bb{E}[v(s)|\sr{F}_s^0] = \bar \psi(s)$, then we would see that $(\bar X,\bar Y,\bar v)$ is also a solution to the system \eqref{bar X dynamics},\eqref{bar Y dynamics},\eqref{bar v coupling}, and it would not be difficult to see that therefore $\bar X = \bar \xi$ and $\bar Y = \bar \eta$ as well.
Thus we would have that $v$ is a Nash equilibrium.
However, this is a nontrivial criterion on $v$, since there is nothing which obviously connects the control problem of minimizing \eqref{mfg objective functional} with the system \eqref{bar X dynamics},\eqref{bar Y dynamics},\eqref{bar v coupling}.
So this fails to be an appropriate criteria for determining the existence of Nash equilibrium.

On the other hand, if we take $\bar v$ out of the problem, i.e.~if we set $\bar B = \bar D = \bar G = \bar R = \bar S_2 = 0$, then we obtain, as in \cite{bensoussan2011linear}, a necessary and sufficient condition for the existence of Nash equilibrium.
This is summarized in the following proposition, whose proof is essentially the same as that of \cite[Theorem III.4]{bensoussan2011linear}.
\begin{proposition}
Let $\bar B = \bar D = \bar G = \bar R = \bar S_2 = 0$.
Then there exists a mean-field Nash equilibrium for the objective functional $J_{X,t}^{NE}$, given in \eqref{mfg objective functional}, if and only if there exists a solution to the forward-backward system of stochastic differential equations given by \eqref{bar X dynamics},\eqref{bar Y dynamics}, and \eqref{bar v coupling}.
\end{proposition}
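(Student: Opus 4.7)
My plan is to adapt the strategy of~\cite[Theorem III.4]{bensoussan2011linear}, using Proposition~\ref{mfg solution} to reformulate mean-field Nash equilibrium as a fixed-point problem. The vanishing assumption $\bar B = \bar D = \bar G = \bar R = \bar S_2 = 0$ is crucial: it is precisely what allows the $\sr{F}^0$-conditional expectations of the optimality FBSDE \eqref{lq mfg dynamics}--\eqref{coupling mfg} to close into a self-contained system, which turns out to be exactly \eqref{bar X dynamics}--\eqref{bar v coupling}. Without this assumption, those conditional expectations would still couple back to the fluctuation variables $X - \bar X$ and $v - \bar v$ through the terms $\bar B \bar v$, $\bar D \bar v$, $\bar G \bar v$, $\bar R \bar v$, and $\bar S_2 \bar v$.

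For the necessity direction, a Nash equilibrium $\hat v$ gives via Proposition~\ref{mfg solution} adapted $(X, Y, Z, Z_0)$ satisfying \eqref{lq mfg dynamics}--\eqref{coupling mfg} with $\bar \xi = \bar X$ and $\bar \psi = \bar v$. Taking $\sr{F}^0$-conditional expectations termwise, the $dW$-martingale contributions vanish, and grouping identities such as $A \bar X + \bar A \bar X = (A + \bar A)\bar X$ and $Q \bar X + \bar Q \bar X = (Q + \bar Q)\bar X$ converts the three equations directly into \eqref{bar X dynamics}, \eqref{bar Y dynamics}, and \eqref{bar v coupling}, exhibiting $(\bar X, \bar Y, \bar Z, \bar Z_0, \bar v)$ as a solution.

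For the converse, given $(\bar \xi, \bar \eta, \bar \zeta, \bar \zeta_0, \bar \psi)$ solving \eqref{bar X dynamics}--\eqref{bar v coupling} with $\bar \xi(t) = \bb{E}[x \mid \sr{F}_t^0]$, I would take $(\bar \xi, \bar \psi)$ as exogenous inputs to $J_{x,t}^{NE}(\cdot;\bar \xi, \bar \psi)$ and invoke Proposition~\ref{mfg solution} to produce the unique minimizer $v^*$ with adjoint $(X^*, Y^*, Z^*, Z_0^*)$. The remaining task is to verify the fixed-point identities $\bar X^* = \bar \xi$ and $\bar v^* = \bar \psi$; by Proposition~\ref{mfg solution} these promote $v^*$ to a mean-field Nash equilibrium. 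Taking $\sr{F}^0$-conditional expectations of the optimality FBSDE of $v^*$ produces a reduced linear forward-backward system in $(\bar X^*, \bar Y^*, \bar Z^*, \bar Z_0^*, \bar v^*)$ driven by $(\bar \xi, \bar \psi)$, and a direct substitution shows that $(\bar \xi, \bar \eta, \bar \zeta, \bar \zeta_0, \bar \psi)$ is also a solution of this same reduced system.

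The main obstacle is to conclude uniqueness of the reduced system, so that the two solutions must agree. I would handle this by adapting the Riccati decoupling of Section~\ref{sec:mftc} to the present setting: writing $\bar Y^*$ as an affine function of $\bar X^*$ with deterministic coefficients satisfying a Riccati equation analogous to \eqref{Pi equation}, existence and symmetry of the unique solution of that Riccati equation (guaranteed by $Q \geq \delta_1 I$, $R \geq \delta_2 I$, $H \geq 0$, and $\|S\|_\infty^2 < \delta_1 \delta_2$) reduce the forward equation to a linear SDE with a unique solution, forcing $\bar X^* = \bar \xi$ and $\bar v^* = \bar \psi$. Equivalently, one can pass to differences $\delta X := \bar X^* - \bar \xi$, $\delta v := \bar v^* - \bar \psi$, etc., apply It\^o to $\langle \delta Y, \delta X\rangle$ after using the coupling to substitute for $B^T \delta Y + G^T \delta Z_0$, and use the uniform convexity of the quadratic form in $(\delta X, \delta v)$ (together with Riccati control over the auxiliary $\delta Z$ term arising from $C^T \delta Z$) to force $\delta X \equiv \delta v \equiv 0$. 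Once this is in hand, the remainder is routine bookkeeping with conditional expectations.
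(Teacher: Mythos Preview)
Your approach is correct and matches the paper's own treatment: the paper does not give an independent proof but simply states that the argument ``is essentially the same as that of \cite[Theorem III.4]{bensoussan2011linear},'' and your outline is precisely an adaptation of that strategy, exploiting the fact that the vanishing hypotheses remove all occurrences of $\bar\psi$ from the optimality system \eqref{lq mfg dynamics}--\eqref{coupling mfg} so that conditioning on $\sr{F}^0$ closes up.

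One comment on your sufficiency argument: the ``reduced system driven by $(\bar\xi,\bar\psi)$'' obtained by conditioning the optimality FBSDE is not a standard $\sr{F}^0$-FBSDE, because $\bar Z^*$ appears in the drift of $\bar Y^*$ and in the coupling \emph{without} a corresponding $\bar Z^*\,dW$ martingale term (that term vanishes under conditional expectation). So uniqueness for the reduced system is not immediate, and your energy estimate on $\langle \delta Y,\delta X\rangle$ leaves the cross term $\langle \delta Z, C\,\delta X + D\,\delta v\rangle$ uncontrolled in general. Your Riccati route is the right fix: for the \emph{full} optimality FBSDE (a standard LQ problem with exogenous input $\bar\xi$), the Riccati decoupling $Y^* = P X^* + \Gamma$ from \eqref{P equation} determines $Z^* = P(CX^* + \bar C\bar\xi + Dv^*)$ explicitly, hence $\bar Z^*$ is not free but is an affine function of $(\bar X^*,\bar v^*,\bar\xi)$. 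Substituting this relation closes the reduced system into a genuine linear $\sr{F}^0$-FBSDE with Lipschitz coefficients, for which uniqueness follows from the same Riccati equation. This is the content behind your parenthetical ``Riccati control over the auxiliary $\delta Z$ term,'' and is exactly the maneuver implicit in \cite{bensoussan2011linear}; it would be worth making this step explicit rather than leaving it as an alternative.
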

There is another condition on the coefficients, given in Section \ref{sec:mfg = mftc} below, which permits us to assert that there \textit{always} exists a unique mean field Nash equilibrium, namely when the mean field game corresponds to an optimal control problem.
While this is not the most general case, it is directly applicable to the economics example which motivated this work.
Moreover, as we will see, we need not give up all dependence on the conditional expectation of the control variable.

\subsection{When is a Mean Field Game equivalent to a Mean Field Type Control Problem?} \label{sec:mfg = mftc}

It is now well-known that, as pointed out in \cite{lasry07}, mean field Nash equilibria can be characterized--at least formally--as optimality conditions for mean field type control problems.
Often this is exploited to obtain results on existence and uniqueness of solutions to mean field games.
See, for instance, \cite{cardaliaguet2013weak,graber2014optimal,cardaliaguet2014mean,cardaliaguet2015second,benamou2016variational}.
Here we point out a condition under which the Mean Field Type Control Problem and the Mean Field Game are equivalent for the general linear-quadratic case.

\begin{proposition} \label{mfg = mftc}
Let $\bar A, \bar B, \bar C, \bar D, \bar F, \bar G, \bar q,$ and $\bar r$ all be zero.
Additionally, assume $\bar S = \bar S_1 = \bar S_2$.

Suppose $\hat v(s)$ is the optimal control for the linear-quadratic functional $J_{X,t}^{LQ}(v)$ defined by \eqref{mftc objective functional}, with corresponding optimal trajectory $\hat X(s)$ defined by \eqref{lq dynamics}.
Define $\bar \xi(s) := \bb{E}[\hat X(s)| \sr{F}_s^0]$ and $\bar \psi(s) := \bb{E}[\hat v(s)| \sr{F}_s^0]$.
Then $\hat v(s)$ is a mean field Nash equilibrium for $J_{X,t}^{NE}(v;\bar \xi,\bar \psi)$ defined by \eqref{mfg objective functional}.

Conversely, if $\hat v(s)$ is a mean field Nash equilibrium for $J_{X,t}^{NE}(v;\bar \xi,\bar \psi)$, then it is also an optimal control for $J_{X,t}^{LQ}(v)$.
\end{proposition}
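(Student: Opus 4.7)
The plan is to reduce the equivalence to the first-order (FBSDE) characterizations from Propositions \ref{mftc solution} and \ref{mfg solution}, and to show that under the stated hypotheses these two systems become literally the same equations once the consistency conditions $\bar\xi = \bar X$ and $\bar\psi = \bar v$ are imposed.

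First I would substitute $\bar A = \bar B = \bar C = \bar D = \bar F = \bar G = \bar q = \bar r = 0$ into the MFTC dynamics \eqref{lq dynamics}, adjoint \eqref{adjoint}, and coupling \eqref{coupling}, and likewise simplify the MFG analogues \eqref{lq mfg dynamics}, \eqref{adjoint mfg}, \eqref{coupling mfg} using $\bar S = \bar S_1 = \bar S_2$. A term-by-term check shows that both state dynamics collapse to $dX = (AX + Bv)ds + (CX + Dv)dW + (FX + Gv)dW_0$; that both adjoint drifts reduce to $A^T Y + C^T Z + F^T Z_0 + QX + \bar Q \bar X + S^T v + \bar S^T \bar v + q$ with terminal condition $HX(T) + \bar H \bar X(T)$; and that both coupling relations read $B^T Y + D^T Z + G^T Z_0 + Rv + \bar R \bar v + SX + \bar S \bar X + r = 0$. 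Hence a quintuple $(X,v,Y,Z,Z_0)$ solves the MFTC FBSDE if and only if it solves the MFG FBSDE with $\bar\xi := \bar X$ and $\bar\psi := \bar v$.

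For the forward direction, given the MFTC optimizer $\hat v$ with trajectory $\hat X$, Proposition \ref{mftc solution} produces adjoint processes $(Y,Z,Z_0)$ satisfying the MFTC FBSDE. Setting $\bar\xi := \bb{E}[\hat X \mid \sr{F}_s^0]$ and $\bar\psi := \bb{E}[\hat v \mid \sr{F}_s^0]$, the matching from the previous step says $(\hat X, \hat v, Y, Z, Z_0)$ is an adapted solution of the MFG FBSDE for these $\bar\xi, \bar\psi$. The converse half of Proposition \ref{mfg solution} then gives that $\hat v$ minimizes $v \mapsto J^{mfg}_{x,t}(v;\bar\xi, \bar\psi)$, while the fixed-point conditions hold by construction; so $\hat v$ is a mean field Nash equilibrium. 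The converse direction runs symmetrically: starting from a Nash equilibrium $\hat v$ with induced $(\bar\xi,\bar\psi)$, apply Proposition \ref{mfg solution} to extract the MFG FBSDE, then invoke the matching and the converse half of Proposition \ref{mftc solution} to conclude that $\hat v$ optimizes $J^{LQ}_{x,t}$.

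I do not expect a substantive obstacle; the argument is careful bookkeeping of coefficients. The one step worth highlighting is why the single hypothesis $\bar S = \bar S_1 = \bar S_2$ is exactly the right one. In the MFTC problem the symmetric cross term $2\langle \bar S \bar X, \bar v\rangle$ in \eqref{mftc objective functional} generates \emph{two} first-order contributions upon taking the G\^ateaux derivative in $v$: a term $\bar S^T \bar v$ in the adjoint drift and a term $\bar S \bar X$ in the coupling. In the MFG problem these two terms are produced separately, by $\bar S_2$ (which multiplies $\bar\psi$ in \eqref{mfg objective functional}) and by $\bar S_1$ (which multiplies $\bar\xi$), so matching the two FBSDE systems forces precisely $\bar S_1 = \bar S_2 = \bar S$. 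As a corollary of the equivalence one also obtains uniqueness of the mean field Nash equilibrium, since $J^{LQ}_{x,t}$ is strictly convex under Assumption \ref{coeffs}.
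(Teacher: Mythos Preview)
Your proposal is correct and follows exactly the same strategy as the paper: reduce both problems to their FBSDE characterizations (Propositions \ref{mftc solution} and \ref{mfg solution}) and observe that under the hypotheses and the consistency conditions $\bar\xi = \bar X$, $\bar\psi = \bar v$ the two systems coincide term by term. Your write-up is in fact more detailed than the paper's one-sentence proof, and your explanation of why $\bar S_1 = \bar S_2 = \bar S$ is precisely the condition needed to match the adjoint drift and coupling terms is a nice addition.
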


\begin{proof}
It suffices to note that, under the given assumptions, the (forward-backward) systems of stochastic differential equations and their coupling conditions given by Propositions \ref{mftc solution} and \ref{mfg solution} are equivalent, once we have taken into account the equilibrium condition $\bar \xi(s) = \bb{E}[\hat X(s)| \sr{F}_s^0]$ and $\bar \psi(s) = \bb{E}[\hat v(s)| \sr{F}_s^0]$.
\end{proof}

We note the following simple corollary of Proposition \ref{mfg = mftc}.
\begin{corollary}
\label{existence of Nash equilibria}
Let $\bar A, \bar B, \bar C, \bar D, \bar F, \bar G, \bar q,$ and $\bar r$ all be zero.
Additionally, assume $\bar S = \bar S_1 = \bar S_2$.

Let $\bar \xi,\bar \psi$ be given $\s{F}_s^0$-adapted processes with values in $\bb{R}^d,\bb{R}^m$ respectively.
Then the linear-quadratic functional $J_{X,t}^{NE}(v;\bar \xi,\bar \psi)$ defined by \eqref{mfg objective functional} has a mean field Nash equilibrium if and only if $\bar \xi(s) = \bb{E}[\hat X(s)| \sr{F}_s^0]$ and $\bar \psi(s) = \bb{E}[\hat v(s)| \sr{F}_s^0]$, where $\bar v$ is the optimal control of $J_{X,t}^{LQ}(v)$ with corresponding optimal trajectory $\hat X(s)$ defined by \eqref{lq dynamics}.
Moreover, this equilibrium is unique.
\end{corollary}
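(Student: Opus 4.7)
The plan is to derive Corollary \ref{existence of Nash equilibria} as an almost immediate consequence of Proposition \ref{mfg = mftc} combined with the uniqueness of minimizers for the mean field type control functional $J_{X,t}^{LQ}$ proved earlier (and made explicit by Theorem \ref{riccati solver}). The key observation is that under the stated hypotheses on the coefficients, the MFTC problem is unambiguously well-posed, and Proposition \ref{mfg = mftc} turns Nash equilibria of $J_{X,t}^{NE}$ into optimizers of $J_{X,t}^{LQ}$ and vice versa. All that remains is to keep track of how the fixed point data $(\bar\xi,\bar\psi)$ are related to the MFTC solution.

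For the forward direction (``if''), I would assume that the prescribed $(\bar\xi,\bar\psi)$ satisfy $\bar\xi(s)=\bb{E}[\hat X(s)|\sr{F}_s^0]$ and $\bar\psi(s)=\bb{E}[\hat v(s)|\sr{F}_s^0]$, where $\hat v$ is the unique minimizer of $J_{X,t}^{LQ}$ (existence and uniqueness coming from the earlier lemma, and an explicit description from Theorem \ref{riccati solver}) and $\hat X$ is its optimal trajectory. Then Proposition \ref{mfg = mftc} directly provides that $\hat v$ is a mean field Nash equilibrium for $J_{X,t}^{NE}(v;\bar\xi,\bar\psi)$.

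For the converse direction (``only if''), suppose a Nash equilibrium $\hat v$ exists for $J_{X,t}^{NE}(v;\bar\xi,\bar\psi)$ with corresponding state $\hat X$. By the defining equilibrium condition \eqref{Nash equilibrium condition} we automatically have $\bar\xi(s)=\bb{E}[\hat X(s)|\sr{F}_s^0]$ and $\bar\psi(s)=\bb{E}[\hat v(s)|\sr{F}_s^0]$. Then the converse statement in Proposition \ref{mfg = mftc} tells us $\hat v$ must also be an optimal control for $J_{X,t}^{LQ}$. Together these two observations pin down $(\bar\xi,\bar\psi)$ exactly as the conditional expectations of the MFTC optimizer, establishing the claimed ``if and only if.''

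Uniqueness of the Nash equilibrium is then immediate: any Nash equilibrium $\hat v$ must, by the previous paragraph, be a minimizer of $J_{X,t}^{LQ}$, and strict convexity of $J_{X,t}^{LQ}$ (shown in the second lemma of Section \ref{sec:mftc}) guarantees that this minimizer is unique. There is no real obstacle in this proof; the whole argument is a direct bookkeeping exercise once Proposition \ref{mfg = mftc} is in hand, and the only subtlety worth a line of justification is that the assumptions of Corollary \ref{existence of Nash equilibria} on the coefficients (in particular $\bar S=\bar S_1=\bar S_2$ together with the MFG assumptions) indeed imply the hypotheses of Assumption \ref{coeffs} needed to invoke the MFTC existence and uniqueness theory.
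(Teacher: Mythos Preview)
Your proposal is correct and matches the paper's approach: the paper presents this result as a ``simple corollary of Proposition \ref{mfg = mftc}'' without giving a separate proof, and your argument---invoking Proposition \ref{mfg = mftc} in both directions together with the strict convexity/uniqueness of the MFTC minimizer---is exactly the intended derivation.
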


Note that the objective functionals $J_{X,t}^{LQ}$ and $J_{X,t}^{NE}$ given in \eqref{mftc objective functional} and \eqref{mfg objective functional}, respectively, are not precisely the same under the assumptions of Proposition \ref{mfg = mftc}.
In other words, even though $\hat v$ solves both a fixed point Nash equilibrium and an mean field type control problem, the costs are different.
Indeed, suppose the assumptions of Proposition \ref{mfg = mftc} hold, and take $\hat v$ to be a mean field Nash equilibrium with corresponding trajectory $\hat X$.
Define $\bar \xi(s) := \bb{E}[\hat X(s)| \sr{F}_s^0]$ and $\bar \psi(s) := \bb{E}[\hat v(s)| \sr{F}_s^0]$.
By the proposition, $\hat v$ is the minimizer of both $J^{NE}_{X,t}(v;\bar \xi,\bar \psi)$ and $J^{LQ}_{x,t}(v)$.
Now observe that
\begin{multline}
J^{NE}_{X,t}(\hat v;\bar \xi,\bar \psi) - J^{LQ}_{x,t}(\hat v) 
\\
= \bb{E}\left\{\int_t^T \left[\langle \bar Q(s)\bar X(s),\bar X(s) \rangle + \langle \bar R(s)\bar v(s),\bar v(s) \rangle + \langle \bar S(s)\bar X(s),\bar v(s) \rangle\right]ds + \langle \bar H \bar X(T),\bar X(T) \rangle \right\}.
\end{multline}
We call this difference the ``price of anarchy," since it is the added aggregate cost of allowing all players to independently choose their optimal strategy.

\subsection{$\epsilon$-Nash equilibrium of an $N$-player game}

In this section we discuss the relationship between the mean field game given in the previous section and the analogous $N$-player game, which can be formulated as follows.
First, we specify that $W_1,\ldots,W_N$ are $N$ independent Wiener processes, and $X_1,\ldots,X_N$ are $N$ i.i.d.~random variables.
The state of player $i \in \{1,2,\ldots,N\}$ is given by the dynamics
\begin{multline} \label{lq N player game dynamics}
dX_i(s) = \left\{A(s)X_i(s) + \bar{A}(s)\frac{1}{N-1}\sum_{j \neq i}X_j(s) + B(s)v_i(s) + \bar{B}(s)\frac{1}{N-1}\sum_{j\neq i}v_j(s) \right\}ds
\\
+ \left\{C(s)X_i(s) + \bar{C}(s)\frac{1}{N-1}\sum_{j \neq i}X_j(s) + D(s)v_i(s) + \bar{D}(s)\frac{1}{N-1}\sum_{j\neq i}v_j(s)\right\}dW_i(s)
\\
+ \left\{F(s)X_i(s) + \bar{F}(s)\frac{1}{N-1}\sum_{j \neq i}X_j(s) + G(s)v_i(s) + \bar{G}(s)\frac{1}{N-1}\sum_{j\neq i}v_j(s)\right\}dW_0(s), \ \ X_i(t) = X_i
\end{multline}
where $v_i$ is the control chosen by player $i$.
The cost functional for player $i$ is given by
\begin{multline} \label{N player game objective functional}
J^{N,i}_{X_i,t}(v_1,\ldots,v_N) = J^{N,i}_{X_i,t}(v_i;\{v_j\}_{j \neq i})
\\ = \bb{E}\biggl\{\int_t^T \bigl[\langle Q(s)X_i(s),X_i(s) \rangle  + 2\frac{1}{N-1}\sum_{j\neq i}\langle \bar Q(s)X_j(s),X_i(s) \rangle  
\\
+ \langle R(s)v_i(s),v_i(s) \rangle + 2\frac{1}{N-1}\sum_{j\neq i}\langle \bar R(s) v_j(s),v_i(s) \rangle
+ 2\langle S(s)X_i(s),v_i(s) \rangle 
\\
+ 2\frac{1}{N-1}\sum_{j\neq i}\langle \bar S_1(s) X_j(s),v_i(s) \rangle + 2\frac{1}{N-1}\sum_{j\neq i}\langle \bar S_2(s) X_i(s), v_j(s) \rangle
\\
+ 2\langle q(s),X_i(s)\rangle + 2\frac{1}{N-1}\sum_{j\neq i}\langle \bar q(s), X_j(s)\rangle
+ 2\langle r(s),v_i(s) \rangle + 2\frac{1}{N-1}\sum_{j\neq i}\langle \bar r(s), v_j(s) \rangle \bigr]ds
\\
+ \langle HX_i(T),X_i(T) \rangle + 2\frac{1}{N-1}\sum_{j\neq i}\langle \bar H X_j(T), X_i(T) \rangle \biggr\}.
\end{multline}
We seek to prove that the mean field equilibrium $\hat v$ can be used as an approximate Nash equilibrium for the $N$-player game, in a way that is stated precisely below.
\begin{definition}
\label{def epsilon equilibrium}
We say that $\{\hat v_i\}_{i=1}^N$ is an ``$\epsilon$-Nash equilibrium" for the $N$-player game provided that, for all $i \in \{1,\ldots,N\}$,
\begin{equation}
J^{N,i}_{X_i,t}(v_i;\{\hat v_j\}_{j \neq i}) \geq J^{N,i}_{X_i,t}(\hat v_i;\{\hat v_j\}_{j \neq i}) - \epsilon
\end{equation}
for any set of controls $\{v_i\}_{i=1}^N$.
\end{definition}
See, for instance, \cite{cardaliaguet2010notes,huang2003individual,huang2007large,huang2006large}.
The following theorem states that the mean field Nash equilibrium is an $\epsilon$-Nash equilibrium for the $N$-player game.
However, we are unable to prove it here in the general case.
Instead, we restrict our attention to the case where the mean field game is equivalent to an optimal control problem, as in Proposition \ref{mfg = mftc}.
\begin{theorem} \label{epsilon equilibrium}
Let $\bar A, \bar B, \bar C, \bar D, \bar F, \bar G, \bar q,$ and $\bar r$ all be zero.
Additionally, assume $\bar S = \bar S_1 = \bar S_2$.

Let $v^*_i$ be a mean field Nash equilibrium for \eqref{mfg objective functional} with $X = X_i$ and $W = W_i$.
Then for any $\epsilon > 0$ there exists $N_\epsilon$ large enough such that if $N \geq N_\epsilon$, then $\{v^*_i\}_{i=1}^N$ is an $\epsilon$-Nash equilibrium for the $N$-player game.
\end{theorem}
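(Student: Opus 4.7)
The plan is to exploit the fact that under the present hypotheses ($\bar A=\bar B=\bar C=\bar D=\bar F=\bar G=0$) the $N$-player dynamics \eqref{lq N player game dynamics} decouple: the state $X_i$ depends only on $v_i$ and on $(X_i(t),W_i,W_0)$, never on any other player's state or control. Consequently, when every player uses the mean field equilibrium $v_i^*$ built from the Riccati feedback of Theorem~\ref{riccati solver} applied to its own idiosyncratic data, the pairs $(X_i^*,v_i^*)_{i=1}^N$ are conditionally i.i.d.~given $\sr{F}_s^0$, with conditional means exactly equal to the mean field quantities $(\bar\xi(s),\bar\psi(s))$. A routine variance computation then produces the quantitative conditional law of large numbers
\begin{equation*}
\bb{E}\int_t^T \left|\tfrac{1}{N-1}\sum_{j\neq i}X_j^*(s)-\bar\xi(s)\right|^2 ds + \bb{E}\int_t^T \left|\tfrac{1}{N-1}\sum_{j\neq i}v_j^*(s)-\bar\psi(s)\right|^2 ds \leq \frac{C}{N-1},
\end{equation*}
together with the analogous terminal-time estimate at $s=T$.

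Next, I would combine this bound with the standard SDE estimate $\bb{E}\int_t^T|X_i|^2\,ds \leq C(1+\|v_i\|_{L^2}^2+\bb{E}|X_i(t)|^2)$ for \eqref{lq N player game dynamics}, and apply Cauchy--Schwarz to each cross term of \eqref{N player game objective functional}, to obtain a quantitative comparison
\begin{equation*}
\left|J^{N,i}_{X_i,t}(v_i;\{v_j^*\}_{j\neq i}) - J^{NE}_{X_i,t}(v_i;\bar\xi,\bar\psi)\right| \leq \frac{C(1+\|v_i\|_{L^2})}{\sqrt{N-1}}
\end{equation*}
valid for \emph{every} admissible deviation $v_i$, with $C$ independent of $N$ and of $v_i$. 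The decoupling of dynamics is crucial here: replacing $v_i^*$ by a deviation $v_i$ does not alter the empirical averages $\frac{1}{N-1}\sum_{j\neq i}X_j^*$ and $\frac{1}{N-1}\sum_{j\neq i}v_j^*$ entering the cost, which is what lets me carry out this comparison for arbitrary deviations.

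Finally, I localize to bounded deviations using coercivity. By Assumptions~(4) and (6) and the boundedness of all coefficients, $v_i\mapsto J^{N,i}_{X_i,t}(v_i;\{v_j^*\}_{j\neq i})$ is uniformly convex in $v_i$ with a convexity constant that does not depend on $N$, since the empirical averages of the other players' data enter only through terms affine in $v_i$. Combining this with a uniform-in-$N$ upper bound on $J^{N,i}_{X_i,t}(v_i^*;\{v_j^*\}_{j\neq i})$ (obtained by applying the previous comparison with $v_i=v_i^*$), I extract $K_0$ independent of $N$ such that any deviation with $\|v_i\|_{L^2}^2>K_0$ already costs at least $J^{N,i}_{X_i,t}(v_i^*;\{v_j^*\}_{j\neq i})$, making the $\epsilon$-Nash inequality trivial. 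For the remaining deviations with $\|v_i\|_{L^2}^2\leq K_0$, I choose $N$ so that $C(1+\sqrt{K_0})/\sqrt{N-1}\leq\epsilon/2$ and chain
\begin{equation*}
J^{N,i}_{X_i,t}(v_i;\{v_j^*\}) \geq J^{NE}_{X_i,t}(v_i;\bar\xi,\bar\psi) - \tfrac{\epsilon}{2} \geq J^{NE}_{X_i,t}(v_i^*;\bar\xi,\bar\psi) - \tfrac{\epsilon}{2} \geq J^{N,i}_{X_i,t}(v_i^*;\{v_j^*\}) - \epsilon,
\end{equation*}
where the middle inequality uses that $v_i^*$ is a mean field Nash equilibrium (Corollary~\ref{existence of Nash equilibria}). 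The only delicate point I anticipate is securing the uniform-in-$N$ a priori bound $K_0$; once that is in hand, the rest is routine conditional LLN combined with quadratic-cost estimates.
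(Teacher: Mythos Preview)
Your approach is essentially the same as the paper's: both exploit the decoupling of the dynamics under the hypotheses, invoke a conditional law of large numbers for the conditionally i.i.d.\ pairs $(X_j^*,v_j^*)$ to compare $J^{N,i}$ with $J^{mfg}$, and then chain the inequalities through the optimality of $v_i^*$ for the mean field problem.

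There is one point on which your proposal is more careful than the paper. The paper simply asserts that $J_{x_i,t}^{mfg}(v_i;\bar X_i^*,\bar v_i^*) - J_{x_i,t}^{N,i}(v_i;\{v_j^*\}_{j\neq i}) = o(1)$ ``by analogous reasoning'' for an \emph{arbitrary} deviation $v_i$, without addressing whether this $o(1)$ is uniform in $v_i$; in fact the cross terms scale with $\|X_i\|_{L^2}$ and hence with $\|v_i\|_{L^2}$, so uniformity is not automatic. You handle this correctly by making the comparison quantitative, $C(1+\|v_i\|_{L^2})/\sqrt{N-1}$, and then localizing to bounded deviations via the uniform convexity of $v_i \mapsto J^{N,i}_{X_i,t}(v_i;\{v_j^*\}_{j\neq i})$, which is indeed independent of $N$ since the other players' data enter only through terms affine in $(v_i,X_i)$. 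This localization step is a genuine refinement of the paper's argument, and the ``delicate point'' you flag---the uniform-in-$N$ bound $K_0$---follows exactly as you indicate from the uniform convexity plus the uniform upper bound on $J^{N,i}_{X_i,t}(v_i^*;\{v_j^*\})$ obtained from the comparison at $v_i=v_i^*$.
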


\begin{proof}
To begin with, we write down the dynamics which, given the hypotheses of the theorem, are much simpler than \eqref{lq N player game dynamics}:
\begin{equation} \label{simpler lq N player game dynamics}
dX_i = (AX_i + Bv_i)ds
+ (CX_i + Dv_i)dW_i
+ (FX_i + Gv_i)dW_0, \ \ X_i(t) = x_i
\end{equation}
as well as the slightly simplified cost functionals:
\begin{multline} \label{simpler N player game objective functional}
J^{N,i}_{X_i,t}(v_1,\ldots,v_N) = J^{N,i}_{X_i,t}(v_i;\{v_j\}_{j \neq i})
\\ = \bb{E}\biggl\{\int_t^T \bigl[\langle QX_i,X_i \rangle  + 2\frac{1}{N-1}\sum_{j\neq i}\langle \bar QX_j,X_i \rangle  
+ \langle Rv_i,v_i \rangle + 2\frac{1}{N-1}\sum_{j\neq i}\langle \bar R v_j,v_i \rangle
+ 2\langle SX_i,v_i \rangle 
\\
+ 2\frac{1}{N-1}\sum_{j\neq i}\langle \bar S X_j,v_i \rangle + 2\frac{1}{N-1}\sum_{j\neq i}\langle \bar S X_i, v_j \rangle
+ 2\langle q,X_i\rangle
+ 2\langle r,v_i \rangle \bigr]ds
\\
+ \langle HX_i(T),X_i(T) \rangle + 2\frac{1}{N-1}\sum_{j\neq i}\langle \bar H X_j(T), X_i(T) \rangle \biggr\}.
\end{multline}
Now let us establish some notation.
Fix $i \in \{1,\ldots,N\}$.
We will let $v_{i}$ be an arbitrary given control.
Recall that $v^*_j, j \in \{1,\ldots,N\}$ is the mean field Nash equilibrium for \eqref{mfg objective functional} with initial condition $X_j$.
\begin{itemize}
\item Let $X_j^*(s)$, $j \in \{1,\ldots,N\}$ be the corresponding state as given by the mean field dynamics \eqref{lq mfg dynamics} with initial condition $X_j$ and $W = W_j$.
Note that under our assumptions, $X_j^*(s)$ is also the solution to the $N$-player game dynamics given by \eqref{lq N player game dynamics} with $v_j = v^*_j$.
\item Let $X_{i}(s)$ refer to the solution of the system given by \eqref{lq N player game dynamics} with  an arbitrary given control $v_{i}$.
\end{itemize}

Now, as a first step towards the necessary estimates to prove the theorem, we compare the difference between the mean field cost and that of the $N$-player game.
 \begin{multline}
 J_{x_i,t}^{mfg}(v^*_i;\bar X_i^*,\bar v^*_i) -
 J_{x_i,t}^{N,i}(v^*_i; \{v^*_j\}_{j\neq i})
 \\
 = 2\bb{E}\left\{\int_t^T \left[ \left\langle \bar Q  \left(\bar X^*_i - \frac{1}{N-1}\sum_{j \neq i} X_j^*\right),X_i^* \right\rangle 
 +
 \left\langle \bar R \left(\bar v^*_i - \frac{1}{N-1}\sum_{j \neq i} v_j^*\right),v_i^* \right\rangle \right. \right.
 \\
 \left. +  \left\langle \bar S \left(\bar X_i^* - \frac{1}{N-1}\sum_{j \neq i} X_j^*\right),v^*_i \right\rangle   + \left\langle \bar S X_i^*,\bar v^*_i - \frac{1}{N-1}\sum_{j \neq i} v_j^* \right\rangle \right] ds  
 \\
 \left. + \left\langle \bar H \left(\bar X^*_i(T) - \frac{1}{N-1}\sum_{j \neq i} X_j^*(T)\right),X^*_i(T) \right\rangle \right\}
 \end{multline}
Observe that the processes $X_i^*$ are conditionally i.i.d.
Since, in addition, they are continuous in $L^2(\Omega)$, we have, as $N \to \infty$,
\begin{equation}
\bar X^*_i(s) - \frac{1}{N-1}\sum_{j \neq i} X_j^*(s) \to 0 \ \text{in} \ L^2(\Omega), \ \text{uniformly in}~ s \in [t,T].
\end{equation}
Next, we recall that, by Proposition \ref{mfg = mftc} and Theorem \ref{riccati solver}, we have a formula for $v^*_i$ in terms of $X^*_i$, namely \eqref{the optimal control} where $P$ and $\Pi$ are the solutions to the Riccati equations \ref{P equation},\eqref{Pi equation}.
Based on these formulas, we can similarly assert that
\begin{equation}
\bar v^*_i - \frac{1}{N-1}\sum_{j \neq i} v_j^* \to 0 \ \text{in} \ L^2.
\end{equation}
Combining these with the a priori bounds on $X_i^*$ and $v_i^*$ in $L^2$ as well as the $L^\infty$ bounds on the coefficients, we see that, as $N \to \infty$,
\begin{equation*}
J_{x_i,t}^{mfg}(v^*_i;\bar X_i^*,\bar v^*_i) -
 J_{x_i,t}^{N,i}(v^*_i; \{v^*_j\}_{j\neq i}) 
 = o(1).
\end{equation*}
By analogous reasoning, we see that
\begin{equation*}
J_{x_i,t}^{mfg}(v_i;\bar X_i^*,\bar v^*_i) -
 J_{x_i,t}^{N,i}(v_i; \{v^*_j\}_{j\neq i}) 
 = o(1)
\end{equation*}
as well.
Now by definition of mean field Nash equilibrium, $v_i^*$ is the optimal control for $J_{x_i,t}^{mfg}(\cdot;\bar X_i^*,\bar v^*_i)$.
Therefore, we have
\begin{align*}
J_{x_i,t}^{N,i}(v_i; \{v^*_j\}_{j\neq i}) &= J_{x_i,t}^{mfg}(v_i;\bar X_i^*,\bar v^*_i) + o(1)\\
&\geq J_{x_i,t}^{mfg}(v^*_i;\bar X_i^*,\bar v^*_i) + o(1)\\
&= J_{x_i,t}^{N,i}(v^*_i; \{v^*_j\}_{j\neq i}) + o(1),
\end{align*}
which is what we wanted to show.
\end{proof}



\section{Example from economics: production of an exhaustible resource} \label{sec:econ}

In this section we develop and analyze a model of exhaustible resource production, following the work of Gu\'eant, Lasry, and Lions in \cite{gueant2011mean} and of Chan and Sircar in \cite{chan2014bertrand,chan2015fracking}.
As in the general mean field setting, we assume the number of producers of a given resource (oil, for example) is very large.
Consider an arbitrary producer.
Let $v(s)$ represent the quantity produced at time $s$, while $X(s)$ is the producer's current level of reserves.
Following \cite{gueant2011mean}, we assume the dynamics are stochastic, with the noise proportional to the current number of reserves.
In addition, we deal with a \textit{common noise,} modeling uncertainty inherent in nature itself, rather than in the measurements of individual producers.
We have
\begin{equation}
\label{production dynamics}
dX(s) = -v(s)ds + \nu X(s) dW(s) + \nu_0X(s) dW_0(s), \ X(t) = x.
\end{equation}
The goal of each individual producer is the maximization of profit.
We model the market competition as a Nash equilibrium.
Define $k(s)$ to be the price at which a producer can sell, and define $\bar k(s)$ to be the market price.
To simplify the analysis and allow the model to fall under our linear-quadratic framework, we follow \cite{chan2014bertrand} and consider a linear demand schedule
\begin{equation} \label{demand schedule}
v(s) = \gamma + \delta\bar k(s) - k(s)
\end{equation}
for given parameters $\gamma,\delta$.
In \cite{chan2014bertrand} they are given by
\begin{equation}
\label{epsilon parameter}
\gamma = \frac{1}{1+\epsilon}, \ \delta = \frac{\epsilon}{1+\epsilon}
\end{equation}
for a parameter $\epsilon \geq 0$ which measures the degree of competition ($\epsilon = 0$ corresponds to monopoly, as the market price is unseen by consumers, whereas $\epsilon = +\infty$ corresponds to perfect competition, as the market price has exactly the same weight as the price offered by each individual firm).
The revenue maximization problem can now be stated as
\begin{equation}
\label{profit maximization}
\sup_{v} \bb{E}\left[\int_t^T e^{-\mu(s-t)}v(s)k(s)ds - e^{-\mu(T-t)}|X(T)|^2 \right].
\end{equation}
To make this into a linear-quadratic functional of the form \eqref{mfg objective functional}, we must first compute $k(s)$ in terms of $v(s)$ and $\bar \psi(s)$, where we recall that $\bar \psi(s) = \bb{E}[\hat v(s)| \sr{F}_s^0]$ is the conditional expectation of the optimal control.
To do this, it will first be necessary to find a formula for the market price.

Let $\hat k(s)$ be the price corresponding to the optimal quantity $\hat v(s)$.
In equilibrium, the market price is precisely the (conditional) expected value of $\hat k(s)$, so that
$$
\bar k(s) = \bb{E}[\hat k(s)|\sr{F}^0_s] = \bb{E}[\gamma + \delta \bar k(s) - \hat v(s)|\sr{F}^0_s] = \gamma + \delta \bar k(s) - \bar \psi(s)
\ \ 
\Rightarrow
\ \ 
\bar k(s) = \frac{\gamma}{1-\delta} - \frac{1}{1-\delta}\bar \psi(s).
$$
Hence from \eqref{demand schedule} we get
$$
k(s) = \frac{\gamma}{1-\delta} - \frac{\delta}{1-\delta}\bar \psi(s) - v(s).
$$
Then \eqref{profit maximization} becomes
\begin{equation}
\label{profit maximization2}
-\inf_{v} \bb{E}\left[\int_t^T e^{-\mu(s-t)}(v^2(s) + 2\beta v(s)\bar \psi(s) - 2\alpha v(s)) ds + e^{-\mu(T-t)}|X(T)|^2 \right]
\end{equation}
where we have set
\begin{equation}
\alpha := \frac{\gamma}{2(1-\delta)}, \ \beta := \frac{\delta}{2(1-\delta)}.
\end{equation}
The dynamics and objective functional above are a much simplified form of \eqref{lq mfg dynamics}, with
$$
A = \bar A = \bar B = \bar C = D = \bar D = \bar F = G = \bar G = 0,
B = -1, C = \nu, F = \nu_0.
$$
Likewise in the cost functional \eqref{mfg objective functional} we have
\begin{align*}
 & Q = \bar Q = \bar H = S = \bar S = q = \bar q = \bar r = 0,
 \\ 
 & R(s) = e^{-\mu(s-t)}, \bar R(s) = \beta e^{-\mu(s-t)}, r(s) = -\alpha e^{-\mu(s-t)}, H = e^{-\mu(T-t)}.
\end{align*}
Moreover, this setup satisfies the assumptions of Proposition \ref{mfg = mftc}, so the Nash equilibrium $\hat v$ quantity is in fact an optimizer for a mean field type control problem.
The corresponding objective functional is
\begin{equation}
J_{x,t}^{LQ}(v(\cdot)) = \bb{E}\left\{\int_t^T e^{-\mu(s-t)}\left(v^2(s) + \beta \left(\bb{E}[v(s)|\sr{F}^0_s]\right)^2 - 2\alpha v(s) \right) ds + e^{-\mu(T-t)}\bb{E}[|X(T)|^2]\right\},
\end{equation}
and the price of anarchy is given by
$$
J_{x,t}^{mfg}(\hat v(\cdot)) - J_{X,t}^{LQ}(\hat v(\cdot)) = \beta \bb{E}\int_t^T e^{-\mu(s-t)}\left(\bb{E}[v(s)|\sr{F}^0_s]\right)^2 ds.
$$
If we take, as in \cite{chan2014bertrand}, the formula $\delta = \epsilon/(1+\epsilon)$, we get simply $\alpha = 1/2$ and $\beta = \epsilon/2$; so we see that the price of anarchy is directly proportional to the competition coefficient $\epsilon$:
$$
J_{X,t}^{mfg}(\hat v(\cdot)) - J_{X,t}^{LQ}(\hat v(\cdot)) = \frac{\epsilon}{2} \bb{E}\int_t^T e^{-\mu(s-t)}\left(\bb{E}[v(s)|\sr{F}^0_s]\right)^2 ds.
$$

\subsection{Computation of the equilibrium strategy}

By the theory developed in the previous section, we can compute the market equilibrium quite explicitly.
The Riccati equations are
\begin{multline}
\left\{
\begin{array}{l}
\dot p + (\nu^2 + \nu_0^2) p - e^{\mu(s-t)}p^2 = 0, \vspace{.2cm}
\\
p(T) = e^{-\mu(T-t)}
\end{array}\right.
\end{multline}
and
\begin{multline}
\left\{
\begin{array}{l}
\dot \pi  + \nu^2 p + \nu_0^2 \pi-(1+\beta)^{-1}e^{\mu(s-t)}\pi^2  = 0, \vspace{.2cm}
\\
\pi(T) = e^{-\mu(T-t)}.
\end{array}\right.
\end{multline}
We can explicitly solve for $p$.
If $\lambda := \mu - (\nu^2+\nu_0^2) \neq 0$, we get
\begin{equation}
p(s) = \frac{\lambda e^{-\mu (s-t)}}{(\lambda+1)e^{\lambda(T-s)}-1}
\end{equation}
while if $\lambda = 0$ we find
\begin{equation}
p(s) = \frac{e^{-\mu (s-t)}}{1+T-s}.
\end{equation}
Let us also remark that in the special case $\nu = 0$ we can even compute $\pi$ explicitly:
\begin{equation}
\pi(s) = \frac{(1+\beta)\lambda e^{-\mu (s-t)}}{((1+\beta)\lambda+1)e^{\lambda(T-s)}-1} \ \text{if} \ \lambda \neq 0, \ \frac{(1+\beta)e^{-\mu (s-t)}}{1+\beta+T-s} \ \text{if} \ \lambda = 0.
\end{equation}

For the optimal trajectory, we have, by Equation \eqref{X},
\begin{equation}
d\bar X(s) = \left(-\frac{1}{1+\beta}e^{\mu (s-t)}\pi(s)\bar X(s) - \frac{\alpha}{1+\beta}\right)ds + \nu_0 \bar X(s)dW_0(s)
\end{equation}
and
\begin{equation}
d(X(s)-\bar X(s)) =
 -e^{\mu (s-t)}p(s)(X-\bar X(s))ds + \nu X(s)dW(s) + \nu_0(X-\bar X(s))dW_0(s).
\end{equation}
These can be solved explicitly in terms of $\pi$ and $p$.
We have
\begin{equation} \label{econ optimal EX}
\bar X(s) = e^{\Psi(s)}\left(\bb{E}[x|\sr{F}_t^0]- \frac{\alpha}{1+\beta}\int_t^s e^{-\Psi(\tau)} d\tau\right)
\end{equation}
where
\begin{equation}
\Psi(s) := \frac{1}{1+\beta}\int_t^s e^{\mu(\tau-t)}\pi(\tau)d\tau + \frac{\nu_0^2}{2}(s-t) + \nu_0(W_0(s)-W_0(t)),
\end{equation}
which we then use to find that
\begin{equation} \label{econ optimal X-EX}
X(s)-\bar X(s) 
= e^{\Phi(s)} \left(x-\bb{E}[x|\sr{F}_t^0] - \nu^2 \int_t^s e^{-\Phi(\tau)} \bar X(\tau)d\tau + \nu \int_t^s e^{-\Phi(\tau)} \bar X(\tau)dW(\tau)\right)
\end{equation}
where
\begin{equation}
\Phi(s) := \int_t^s e^{\mu(\tau-t)}p(\tau)d\tau + \frac{1}{2}(\nu^2+\nu_0^2)(s-t) + \nu (W(s)-W(t))+\nu_0 (W_0(s)-W_0(t)).
\end{equation}
Then we have the following for the optimal control, by Equation \eqref{v}:
\begin{equation}
\label{econ optimal q}
v(s) = e^{\mu (s-t)}\left(p(s)(X(s)-\bar X(s)) + \frac{1}{1+\beta}\pi(s)\bar X(s) + \frac{\alpha}{(1+\beta)^2}\int_t^s \pi(\tau)d\tau \right) + \frac{\alpha}{1+\beta}.
\end{equation}

\subsection{Market price}

From Equation \eqref{econ optimal q} and formula \eqref{demand schedule}, we see that the market price is given by
\begin{equation} \label{market price}
\bar k(s) = 2\alpha - (1+2\beta)\bar v(s) 
= \frac{\alpha}{1+\beta} - e^{\mu(s-t)}\left(\frac{1+2\beta}{1+\beta}\pi(s)\bar X(s) + \frac{\alpha(1+2\beta)}{(1+\beta)^2}\int_t^s \pi(\tau)d\tau \right)
\end{equation}
which, when using the model of Chan and Sircar \cite{chan2014bertrand} so that $\alpha = 1/2$ and $\beta = \epsilon/2$, becomes
\begin{equation} \label{market price epsilon}
\bar k(s) 
= \frac{1}{2+\epsilon} - \frac{2+2\epsilon}{2+\epsilon}e^{\mu(s-t)}\left(\pi(s)\bar X(s) + \int_t^s \pi(\tau)d\tau \right).
\end{equation}
An interesting question is the behavior of the market price as the competition parameter $\epsilon$ increases.
Let us focus on the expected market price:
\begin{equation}
\label{expected market price}
\bb{E}[\bar k(s)]
= \frac{1}{2+\epsilon} - \frac{2+2\epsilon}{2+\epsilon}e^{\mu(s-t)}\left(\pi(s)\chi(s) + \int_t^s \pi(\tau)d\tau \right).
\end{equation}
where we define $\chi(s) = \bb{E}[X(s)] = \bb{E}[\bar X(s)]$.
It is possible to give conditions on the initial data such that the expected value of the state variable remains positive up to time $T$.
To see this, note that
$$
\chi'(s) = -\frac{1}{1+\beta}e^{\mu(s-t)}\pi(s)\chi(s)  - \frac{\alpha}{1+\beta}, \ \ \chi(t) = \bb{E}[x].
$$
We solve to get
$$
\chi(s) = \left(\bb{E}[x] - \frac{\alpha}{1+\beta}\int_t^s \exp\left\{\frac{1}{1+\beta}\int_t^\sigma \tilde \pi(\tau)d\tau\right\}d\sigma\right)\exp\left\{-\frac{1}{1+\beta}\int_t^s \tilde \pi(\tau)d\tau\right\}
$$
where $\tilde \pi(s) = e^{\mu(s-t)}\pi(s)$.
By analyzing $\tilde \pi$, we can find conditions under which $\chi(s)$ will be positive.
Note that
$$
\tilde \pi' + \nu^2 \tilde p + (\nu_0^2-\mu)\tilde \pi - \frac{1}{1+\beta}\tilde \pi^2 = 0, \ \ \tilde \pi(T) = 1
$$
where $\tilde p(s) = e^{\mu(s-t)}p(s)$.
We use the inequality $ab \leq \frac{1}{4}a^2 + b^2$ and the fact that $\tilde p \leq 1$ to get
$$
-\tilde \pi' \leq \nu^2 + \frac{1}{4}(1+\beta)(\nu_0^2-\mu)^2
$$
which yields
$$
\tilde \pi(s) \leq 1 + \frac{1}{4}\left((1+\beta)(\nu_0^2-\mu)^2 +4\nu^2\right)(T-s) \leq \kappa := 1 + \frac{1}{4}\left((1+\beta)(\nu_0^2-\mu)^2 +4\nu^2\right)T.
$$
Using this estimate we deduce
$$
\chi(s) \geq \left(\bb{E}[x] - \frac{\alpha}{\kappa}\left(e^{\kappa(s-t)/(1+\beta)}-1\right)\right)\exp\left\{-\frac{1}{1+\beta}\int_t^s \tilde \pi(\tau)d\tau\right\}
$$
Therefore if we have
\begin{equation} \label{small T}
\frac{\alpha}{\kappa}\left(e^{\kappa(T-t)/(1+\beta)}-1\right) < \bb{E}[x]
\ \ \Leftrightarrow \ \
T-t < \frac{1+\beta}{\kappa}\ln \left(1 + \frac{\kappa}{\alpha}\bb{E}[x]\right),
\end{equation}
that is, if $T-t$ is small enough, we have $\chi(s) = \bb{E}[X(s)] \geq 0$ for all $s \in [t,T]$.
We can interpret this smallness condition as saying that, on average, the initial reserves are not used up by time $T$.
Recalling that $\beta = \epsilon/2$ and $\alpha = 1/2$, we notice that condition \eqref{small T} is equivalent to
$$
T-t < \frac{4+2\epsilon}{4 + \left((1+\epsilon/2)(\nu_0^2-\mu)^2 +4\nu^2\right)T}\ln \left(1 + \frac{1}{2}\left(4 + (1+\epsilon/2)(\nu_0^2-\mu)^2 +4\nu^2\right)T\bb{E}[x]\right)
$$
where the right-hand side is large when $\epsilon$ is large.

Additionally, we observe that
$$
-\tilde \pi' \geq (\nu_0^2-\mu)\tilde \pi - \frac{1}{1+\beta}\tilde \pi^2
$$
using the fact that $\tilde p \geq 0$.
Using the substitution $u = \tilde \pi^{-1}$ we have
$$
u' \geq (\nu_0^2-\mu)u - \frac{1}{1+\beta} \ \ \Rightarrow \ \ u(s) \leq \left(1+\frac{T-s}{1+\beta}\right)e^{|\nu_0^2-\mu|(T-s)}
\ \
\Rightarrow \ \
\tilde \pi(s) > 0 \ \forall \ s \in [t,T].
$$
It follows that $\pi$ is positive.
Therefore, under condition \eqref{small T} (in particular, for $\epsilon$ large enough with respect to $T$) we have that the expected market price
$$
\bb{E}[\bar k(s)] 
= \frac{1}{2+\epsilon} - \frac{2+2\epsilon}{2+\epsilon}e^{\mu(s-t)}\left(\pi(s)\chi(s) + \int_t^s \pi(\tau)d\tau \right)
$$
is decreasing in $\epsilon$ and goes to zero as $\epsilon \to \infty$.

\section{Conclusion}

In this paper we have discussed the solution of a linear-quadratic mean field type control problem with a common noise and a dependence on the conditional expectation of both state and control variables.
We then compared this to mean field games, where it is seen that in certain cases, the two problems are the same, with a difference in the objective functionals which is called the \textit{price of anarchy}.
We then applied this to an economic model of production of exhaustible resources.
Since it is natural for such aggregate quantities as the expected value of the control to appear in economic models, it is useful to note that variational methods can be used to study the Nash equilibrium in this case.
It would be interesting to pursue this approach in future work on more general models than the linear-quadratic setting.

\bibliographystyle{siam}
\bibliography{C:/mybib/mybib}
\end{document}